\documentclass{amsart}
%-----------tikz--------------------------------------------------------
\usepackage{tikz}
\usetikzlibrary{positioning}
\usetikzlibrary{arrows}
%--------------------------------------------------------------
%\usepackage{natbib}
%\biboptions{sort&compress}
\usepackage{amssymb}
\usepackage{amsfonts}
\usepackage{graphicx}
\usepackage{epstopdf}
\usepackage{algorithmic}
\usepackage{amsmath}
\usepackage{amsthm}
\usepackage{pgfplots}
\usepackage{bigints}
\usepackage{color}
\usepackage{xspace}
\usepackage{array}
\usepackage{mathtools}
\mathtoolsset{centercolon}
\usepackage[capitalise]{cleveref}
%-----------newcommand-------------------------------------------------
%\newcommand{\memo}[1]{{\color{red}[MEMO] #1}}
\newcommand{\memo}[1]{}
\newcommand{\relmiddle}[1]{\mathrel{}\middle#1\mathrel{}}
\newcommand{\rd}{\mathrm{d}}
\newcommand{\RR}{\mathbb{R}}
\newcommand{\Sb}{\mathbb{S}}

\newcommand{\fd}{\delta^+}

\newcommand{\cd}[1][1]{\delta^{\langle #1 \rangle}}
\newcommand{\adps}{\partial^{\dagger}_x}
\newcommand{\dps}{\delta_{\mathrm{PS}}}

\newcommand{\acd}{\delta^{-1}_{\mathrm{FD}}}

\newcommand{\fa}{\mu^+}

\newcommand{\rank}{\mathop{\mathrm{rank}}\nolimits}
\newcommand{\Dom}{\mathop{\mathrm{dom}}\nolimits}
\newcommand{\Range}{\mathop{\mathrm{range}}\nolimits}
\newcommand{\Null}{\mathop{\mathrm{null}}\nolimits}
\newcommand{\Car}{\mathop{\mathrm{car}}\nolimits}
\newcommand{\im}{\mathrm{i}}
\newcommand{\pinv}[1]{{#1}^{\dagger}}
\newcommand{\ginv}[1]{{#1}^{\mathrm{g}}}
\newcommand{\pid}{\partial_x^{\mathrm{g}}}

\newcolumntype{I}{!{\vrule width 1.5pt}}
\newlength\savedwidth

% initialize theorem environment ------------------------------------------
\newtheorem{theorem}{Theorem}[section]
\newtheorem{lemma}[theorem]{Lemma}

\newtheorem{proposition}[theorem]{Proposition}

\theoremstyle{definition}
\newtheorem{definition}[theorem]{Definition}
\newtheorem{example}[theorem]{Example}

\theoremstyle{remark}
\newtheorem{remark}[theorem]{Remark}

%\bibliographystyle{amsplain} 

%\journal{Journal of Computational and Applied Mathematics.}

\title[Discretization of evolutionary equations with mixed derivative]{On Spatial Discretization of\\ Evolutionary Differential Equations\\ on the Periodic Domain with a Mixed Derivative}

\author{Shun Sato}
\address{Graduate School of Information Science and Technology, The University of Tokyo, Bunkyo-ku, Tokyo, Japan}
\email{shun\_sato@mist.i.u-tokyo.ac.jp}
\thanks{The first author was supported in part by JSPS Research Fellowship for Young Scientists.}

\author{Takayasu Matsuo}
\address{Graduate School of Information Science and Technology, The University of Tokyo, Bunkyo-ku, Tokyo, Japan}
\email{matsuo@mist.i.u-tokyo.ac.jp}
\thanks{This work was partly supported by JSPS KAKENHI Grant Numbers 25287030, 15H03635, 16KT0016, and 17H02826, and by JST CREST Grant Number JPMJCR14D2, Japan. }

\subjclass[2000]{Primary 65M06; Secondary 35M99}
\date{April, 2017}

\keywords{Evolutionary partial differential equations; Mixed derivative; Spatial discretization; Tseng generalized inverses; Average-difference method}

\begin{document}

\begin{abstract}
Recently, various evolutionary partial differential equations (PDEs) with a mixed derivative have been emerged and drawn much attention. 
Nonetheless, their PDE-theoretical and numerical studies are still in their early stage. 
In this paper, we aim at the unified framework of numerical methods for such PDEs. 
However, due to the presence of the mixed derivative, 
we cannot discuss numerical methods without some appropriate reformulation, which is mathematically challenging itself. 
Therefore, we first propose a novel procedure for the reformulation of target PDEs into a standard form of evolutionary equations. 
This contribution may become an important basis not only of numerical analysis, but also of PDE-theory. 
In order to illustrate this point, we establish the global well-posedness of the sine-Gordon equation. 
After that, we classify and discuss the spatial discretizations based on the proposed reformulation technique. 
As a result, we show the average-difference method is suitable for the discretization of the mixed derivative. 
%and furthermore newly introduce its higher order extensions that opens a new door for accurate computations of such PDEs. 
\end{abstract}

\maketitle

\section{Introduction}
\label{sec_intro}

%\subsection{Target Equations and Their Classification}
\subsection{Background}

In this paper, we consider numerical methods for the initial value problem for 
the evolutionary partial differential equations~(PDEs) in the form
\begin{equation}\label{eq_ivp}
\begin{cases}
\left( u_{t} + g (u,u_x,u_{xx},\dots) \right)_x = f(u,u_x,u_{xx},\dots)  & (t \in (0,T), x \in \Sb ),\\
u(0,x) = u_0 (x)  & ( x \in \Sb ),
\end{cases}
\end{equation}
on the periodic domain $ \Sb := \RR / 2 \pi \mathbb{Z}  $. 
Here, $ u : [0,T) \times \Sb \to \RR $ is a dependent variable, $ t $ and $ x $ are temporal and spatial independent variables, subscripts $t$ and $x$ denote the partial derivative with respect to $t$ and $x$, 
and $ u_0 $ is an initial condition. 
Various equations in the form~\eqref{eq_ivp} have been recently emerged and lively studied (see, \cite[Section~3]{SM2017+} for examples and related results on them). 
However, PDE-theoretical and numerical treatments of them are more difficult than usual evolutionary equations 
due to the presence of the spatial differential operator $ \partial_x := \partial / \partial x$ operating on $u_t$. 
We call the resulting term $ u_{tx}$ \emph{mixed derivative} hereafter. 

The spatial differential operator $ \partial_x $ in the mixed derivative is not invertible 
under the standard setting of the space of periodic functions such as the Sobolev spaces 
(the precise meaning will be explained in Section~\ref{sec_pre}). 
Therefore, some problems in the form~\eqref{eq_ivp} are underdetermined while the others are well-posed. 
In this paper, we focus on the latter case. 
Moreover, we restrict ourselves to the spatial discretization (see, Remark~\ref{rem_temp}), 
because the biggest issue is how to treat the spatial differential operator operating on $u_t$. 
In view of this, we mean the operator on $u_t$ when we merely say ``the spatial differential operator,'' 
although other spatial differential operators can appear in the problem~\eqref{eq_ivp} in the functions $g$ and $f$. 

Before stating our challenge in this paper,
we first note that there are existing works for similar initial value problems on the whole real line with the vanishing boundary conditions. 
Since the spatial differential operator $ \partial_x $ is invertible when regarded as a linear operator between 
some appropriate function spaces (see, e.g., I{\'o}rio--Nunes~\cite{IN1998} and references therein), 
the initial value problem can be equivalently transformed into that for the integro-differential equation in the form
\begin{equation*}\label{eq_ivp_real_integral}
u_{t} + g (u,u_x,u_{xx},\dots) = \partial_x^{-1} f(u,u_x,u_{xx},\dots) %\quad (t \in [0,T], x \in \RR ) 
\end{equation*}
(see, e.g., \cite{LPS2009,LPS2010,CR2015} for examples).  
Here, the operator 
\[ \partial_x^{-1} v(x) := \frac{1}{2} \left(  \int^x_{-\infty} v(y) \rd y - \int^{\infty}_x v(y) \rd y \right) \]
is the inverse of the spatial differential operator
(the inverse operator $ \partial_x^{-1} $ is sometimes called as the antiderivative). 
We call the original differential equation \emph{differential form}, 
and the induced integro-differential equation \emph{integral form}. 
Their integral forms have often been utilized rather than differential forms (see, e.g., \cite{LM2006,CR2015})
in order to prove the well-posedness of the equations with a mixed derivative on the whole real line. 
%It should be noted that the similar reformulation can be done if one deal with a finite interval with the Dirichlet boundary condition, 
%where again $ \partial_x $ is (in some sense) invertible. 

On the other hand, when dealing with numerical methods of evolutionary equations, 
one usually employ a bounded domain and impose some appropriate boundary conditions on it. 
Among several typical boundary conditions, we choose the periodic boundary condition by the following practical reasons: 
it is often used in existing studies on \eqref{eq_ivp} (e.g., \cite{CB2001,LPS2009,LPS2010,YMS2010,MYM2012,CRR2017}); it includes the case of rapidly decreasing functions with sufficiently largely chosen window size, and thus can deal with many practical problems on $ \RR $ with sufficient accuracy; 
it makes numerical analysis simple; it allows the use of pseudospectral method. 
(Hereafter, we focus on the periodic domain unless otherwise stated.)

However, on the periodic domain, 
the spatial differential operator $ \partial_x $ is not invertible as mentioned before. 
This fact makes the reformulation challenging and there is no unified approach so far to derive the integral form. 

Fortunately, however, there are some simple exceptions. 
For example, the reduced Ostrovsky equation
\begin{equation}\label{eq_rOstrovsky}
\left( u_{t} + \left( \frac{1}{2} u^2 \right)_{x} \right)_x = \gamma u. 
\end{equation}
It models water waves on a very shallow rotating fluid, 
%whose frequency is much greater than the Coriolis frequency.
and is also referred as the short wave equation~\cite{H1990}, 
Ostrovsky--Hunter equation~\cite{Bo2005}, Vakhnenko equation~\cite{VP1998}, 
and Ostrovsky--Vakhnenko equation~\cite{BS2013}. 

For this equation, Hunter~\cite{H1990} derived the integral form 
\begin{equation}\label{eq_rOstrovsky_integral}
u_t + \left( \frac{1}{2} u^2 \right)_x = \gamma \check{\partial}_x^{-1} u 
\end{equation}
where the operator $ \check{\partial}_x^{-1} $ is defined as 
\begin{equation}\label{eq_anti_linear}
\check{\partial}_x^{-1} v (x) := \int^x_0  v(y) \rd y - \frac{1}{2 \pi} \int_{\Sb} \int^z_0 v(y) \rd y \rd z 
\end{equation}
(the meaning of the unusual notation $ \check{\partial}_x^{-1}$ and the detail of the reformulation will be described in \cref{subsec_der_lin}). 

Based on the integral form~\eqref{eq_rOstrovsky_integral}, 
various studies on the reduced Ostrovsky equation have been conducted. 
Hunter~\cite{H1990} himself conducted numerical experiments based on the integral form 
(however, the discrete counterpart of the operator $ \check{\partial}_x^{-1} $ is not written). 
Liu--Pelinovsky--Sakovich~\cite{LPS2010} showed the local well-posedness and the condition of the finite-time wave breaking, 
and confirmed their theory by numerical experiments using the pseudospectral method. 
Coclite--Ridder--Risebro~\cite{CRR2017} considered a natural numerical scheme based on the integral form~\eqref{eq_rOstrovsky_integral} by using the trapezoidal rule for the discretization of the operator $ \check{\partial}_x^{-1} $, 
and showed its numerical solution converges to the unique entropy solution. 

Similarly, several other equation including the Ostrovsky equation~\cite{O1978,GHO1998} and its generalization~\cite{LL2006} have been studied by using their integral form (see, also \cite{D2017_1,D2017_2} and references therein). 
There, the operator $ \check{\partial}_x^{-1} $ (more precisely its concrete form~\eqref{eq_anti_linear}) 
had been employed, 
but no one has clarified the class of equations to which this strategy can be applicable. 
In fact, equations in the form
\begin{equation}\label{eq_linear}
\left( u_{t} + h_{x} (u,u_x,\dots) \right)_x = u, 
\end{equation}
can be treated in the manner similar to the reduced Ostrovsky equation above, 
and includes many practical equations such as the generalized Ostrovsky equation. 
Note that, due to the absence of physical example in more general form $ (u_t + g(u,u_x,\dots) )_x = u $, 
which can be treated with a slight modification, 
we focus on the form~\eqref{eq_linear} when $ f(u) = u $. 

The important feature of the class of equations in the form~\eqref{eq_linear} is that 
all numerical solutions satisfy the linear implicit constraint $ \int_{\Sb} u(t,x) \rd x = 0 $ 
(this can be verified by integrating both sides of~\eqref{eq_linear} over $x$ on $ \Sb $). 
Since the set of functions satisfying the linear implicit constraint is a linear subspace 
(of a function space), such a reformulation can be successfully conducted (see, \cref{subsec_der_lin}). 
In this sense, we call it \emph{linear case} despite of possible nonlinearity in $h$ (see also Remark~\ref{rem_linear}). 
%Note that, equations in the form $ (u_t + g(u,u_x,\dots) )_x = u $ can be treated with a slight modification. 
%However, this extension of ``linear case'' is not fruitful because there are no physical example this class but not in the form~\eqref{eq_linear}. 
%Therefore, we prefer to call the class of equations~\eqref{eq_linear} as the linear case. 

On the other hand, in general (i.e., $f$ is not linear in \eqref{eq_ivp}), 
the corresponding implicit constraint becomes $ \mathcal{F} (u(t) ) = 0 $, where 
\begin{equation}\label{eq_ic}
\mathcal{F}( v ) = \int_{\Sb} f (v,v_x,\dots) \rd x. 
\end{equation}
Since the set of functions satisfying the constraint above does not form linear subspace in general, 
the reformulation becomes significantly challenging. 
In this sense, we call it \emph{nonlinear case}. 
For example, the well-known sine-Gordon (sG) equation in light cone coordinates
\begin{equation}\label{eq_SG}
u_{tx} = \sin u
\end{equation}
has the nonlinear constraint $ \int_{\Sb} \sin u (t,x) \rd x = 0$, 
and as its consequence, surprisingly even the local well-posedness has not been shown on the periodic domain 
(except for a partial result in \cite{LY2017_2} via the well-posedness result of another equation).

\begin{remark}
The sine-Gordon equation~\eqref{eq_SG} is a special case of the (nonlinear) Klein--Gordon (KG) equation
%\begin{equation}\label{eq_nlKG}
$ u_{tx} = f(u) $. 
%\end{equation}
%It is also called ``the characteristic form of KG'', or ``KG in null coordinates.''  
Note that, there are certainly many studies on the KG equation in \emph{Euclidean coordinates} $ u_{\tau \tau} - u_{ss} = f(u)$ (see, e.g., Debnath~\cite{Debnath2012} and references therein), 
and these two representations are related by the simple transformation $ \tau = t+ x, \ s = t- x $ of the independent variables. 
However, the results on the Euclidean case do not give much useful information for the light-cone case. 
In fact, if we consider the initial value problem~\eqref{eq_ivp} for the KG equation in light-cone coordinates, 
the corresponding problem in Euclidean coordinates is the problem 
where the ``initial data'' is given along the line $ \tau = -s $, which seems to be aberrant. 

Still, they have been intensively studied under other boundary conditions. 
Tuckwell~\cite{T2015arx} discussed a finite difference scheme under the initial and boundary conditions
\[ u(0,x) = \alpha (x) \ ( x \in [0,L] ) , \qquad u(t,0) = \beta (t) \ (t \in [0,T) ),  \]
which is introduced by Fokas~\cite{F1997} for the linear KG equation $ u_{tx} = u $ and the sine-Gordon equation~\eqref{eq_SG}. 
Thanks to the presence of the Dirichlet boundary condition, 
such a case is similar to the case on the whole real line. 
Pelloni~\cite{P2005} showed the well-posedness of the sine-Gordon equation with the initial and boundary conditions above. 
Pellinovsky--Sakovich~\cite{PS2010} showed the global well-posedness of the sine-Gordon equation on the whole real line. 
%But, to the best of the present authors' knowledge, 
%the well-posedness of the sine-Gordon equation 
%(and any other nonlinear KG equation) on the periodic domain has not been developed. 
\end{remark}

In addition to the KG equations, 
various new equations with a mixed derivative have been emerged recently: 
mKdV-sG equation~\cite{S2001} (mKdV: modified Korteweg--de Vries), 
the generalized sine-Gordon equation~\cite{F1995},  
the modified Hunter--Saxton equation~\cite{FMN2007} and so on. 
Therefore, their theoretical and numerical studies are indispensable, 
and a unified approach to obtain the corresponding integral form that may become their basis is strongly hoped now. 
However, this has been left open so far, as mentioned before, 
which might be attributed to the fact that 
this becomes surprisingly difficult when $ f $ is nonlinear. 

\subsection{Our contribution}

\subsubsection{Continuous part (\cref{sec_der})}

Below, we address our contributions in the present paper. 
In order to circumvent the difficulty described above, 
we give up to follow Hunter's strategy, 
and propose a novel procedure to derive the integral form in Section~\ref{subsec_der_nl}. 
A key ingredient of the proposed procedure is the Tseng generalized inverse operator $ \ginv{\partial}_x $ (Definition~\ref{def_tgi}) of the differential operator $ \partial_x $, 
which is a standard concept of the generalized inverse for linear operators between Hilbert spaces (see, e.g., \cite{IsraelGreville2003}). 
Proposed procedure can be summarized as follows:
\begin{enumerate}
\item Using the property (Lemma~\ref{lem_inv}) of generalized inverses, 
we obtain $ u_t = g(u,u_x,\dots) + \ginv{\partial}_x f (u,u_x,\dots) + c  $, f
where $ c $ does not depend on $x$. 
\item Under some assumptions, by using the implicit constraint $ \mathcal{F} (u) = 0 $, we determine the value of $c$ by~\eqref{eq_def_const}, and obtain the integral form. 
\end{enumerate}
There, we overcome the difficulty by splitting the derivation into two phases. 
In other words, we use the Tseng generalized inverse in order to tentatively derive the integral form allowing the unknown constant $c$, 
which is then separately determined by using the implicit constraint $ \mathcal{F} (u) = 0 $. 

It should be noted that, the operator $ \check{\partial}_x^{-1} $ defined by~\eqref{eq_anti_linear} can be regarded as a special case of the Tseng generalized inverse, 
so that the reformulation method by Hunter and its followers can be viewed as the special case of our procedure. 
There, the class of equations such that ``Hunter's strategy has been successfully used'' can be interpreted as 
those such that ``$c(t)=0$ thanks to its structure'' (see, Example~\ref{ex_II}). 

Note that, since the spatial differential operator $ \partial_x $ is not invertible, 
the target equations turn out to be infinite-dimensional DAEs (differential-algebraic equations), 
whereas the standard evolutionary PDEs are often regarded as infinite-dimensional ODEs (ordinary differential equations). 
Roughly speaking, proposed procedure can be regarded as an infinite-dimensional extension 
of the geometric reduction~\cite{RR1994} for finite-dimensional DAEs (see, Section~\ref{subsec_geometric}). 
Since the geometric reduction for DAEs is a basis of unique existence theory for nonlinear DAEs, 
and all the well-posedness results for \eqref{eq_ivp} are based on their integral form, 
our contribution may be used for PDE-theoretic studies of the PDEs with a mixed derivative. 
In order to illustrate this point, we establish the global well-posedness of the sine-Gordon equation~\eqref{eq_SG} 
by using its integral form, which is newly derived by the proposed procedure (see, \cref{subsec_wp_sg}). 

\subsubsection{Discrete part (\cref{sec_class})}

Roughly speaking, the proposed procedure reveals the equivalence of the differential and integral forms. 
Therefore, we have two ways to devise spatial discretization, 
i.e., discretizing the differential form or the integral form. 
It should be noted that the equivalence of the two forms is only valid in continuous case; 
the discretization of the two forms are essentially different. 
Section~\cref{sec_class} is devoted to discuss this issue. 

Briefly speaking, 
the discretization of the integral form becomes an ODE, 
while the direct spatial discretization of the differential form is an implicit DAE. 
For such a DAE, by using the discrete analogue of the proposed procedure, 
we can derive a corresponding ODE which can be regarded as a discretization of the integral form. 
There, Tseng generalized inverse of the difference operator can be expressed by  
a generalized matrix of the (matrix expression of) difference operator. 

\begin{remark}
The spatial discretization of the differential form can be written in the simple form 
$D \dot{z} = \phi (z) $, where $ D $ is a singular matrix representing a difference operator and $ \dot{z} = \rd z / \rd t $. 
Its temporal discretization is already discussed in the literature~(see, e.g., Hairer--Wanner~\cite{Hairer-Wanner2010}). 
Moreover, the obtained DAE often has index one (see, e.g., Ascher--Petzold~\cite{Ascher-Petzold1998} for details on index of DAEs), 
and thus, known to be numerically tractable. 
Therefore, in this paper, we focus on the spatial discretization, and do not step into the temporal discretization. 
\label{rem_temp}
\end{remark}

Though most existing numerical methods have been based on the integral form, 
%(see Remark~\ref{rem_po} for the only known exception), 
we recommend the \emph{differential form} in practical computation because it
\begin{enumerate}
\item usually has index one; 
\item is free from nonlocal operators (unless $g$ and/or $f$ includes one). 
\end{enumerate} 
On the other hand, discretization of the integral form has a virtue that 
it is fit for some analysis. 

Finally, 
we explore the best spatial discretization that should be employed for the mixed derivative (Section~\ref{sec_mderivative}). 
There, instead of directly analyzing the difference operator itself, 
we investigate its generalized inverse as an approximation of the indefinite integral 
(recall that $ \check{\partial}_x^{-1} $ can be regarded as a generalized inverse of $ \partial_x $, 
which is an indefinite integral).  
In other words, we compare several discretizations of the differential forms by using their integral forms, 
which gives an example of the use of the discretization of the integral form. 
In the present paper, we take this approach, since it seems, at this moment, there is no 
systematic way of evaluating discretization errors when PDE involves the mixed derivative. 

As a result of this exploration, the average-difference method, 
which has been recently introduced by the team including the present authors~\cite{FSM2016}, 
turns out to be superior to other standard methods. 
This fact agrees very well with the numerical observation by Sato--Oguma--Matsuo--Feng~\cite{SOMF2016+} for the sine-Gordon equation. 
Summing up the findings above, we tentatively conclude that, 
for PDEs with a mixed derivative, 
the discretization of the differential form with the average-difference method is recommended. 

%Unfortunately, however, since the average-difference method has been quite recently proposed, 
%it is still less developed. 
%In particular, there are no higher order extensions of the average-difference method, 
%and one may feel that it is not preferable when higher order extensions such as the spectral difference are demanded. 
%
%In view of this, as the last contribution of this paper, 
%in order to ease the concern even if only partially, 
%we devise some higher order extensions of the average-difference method and confirm that they surely inherit the good property of the original average-difference method.  

\subsection{Organization of this paper}

The rest of the paper is organized as follows. 
In Section~\ref{sec_pre}, we show some preliminaries such as function space, variational derivatives, and Tseng generalized inverses. 
The contents in Sections~\ref{sec_der} and \ref{sec_class}  are already described above. 
Then, the paper is concluded in Section~\ref{sec_cr}.

\section{Preliminaries}
\label{sec_pre}

In this paper, $ X^s $ denotes the $s$th Sobolev space on the periodic domain, 
i.e., $ X^s = H^s (\Sb) $ for a nonnegative integer $ s $, 
with the standard inner product. 
Moreover, we define the linear subspace $ \check{X}^s $ of $X^s$ as $\check{X}^s := \left\{ v \in X^s \relmiddle{|} \int_{\Sb} v(x) \rd x  = 0 \right\}$.
%If $ s \ge 2 $, $ X^s \subseteq C^1 (\Sb) $ holds thanks to the Sobolev inequality 
%so that each element of $ X^s $ can be viewed as a continuously differentiable function. 

\begin{remark}
We believe that our strategy described in Section~\ref{subsec_der_nl} can be extended to other settings 
by appropriately defining the function space (see, Section~\ref{sec_cr}). 
In order to emphasize that, 
we here introduce the symbol $ X^s$. 
\end{remark}

\begin{definition}[Variational derivatives]
For a functional $\mathcal{H} : X^s \to \RR$, its variational derivative $ \delta \mathcal{H} / \delta v (v) $ is defined as a function such that 
\[ \left. \frac{\rd}{\rd \epsilon} \mathcal{H} ( v + \epsilon \phi ) \right|_{\epsilon = 0} = \left\langle \frac{\delta \mathcal{H}}{\delta v} (v) , \phi \right\rangle  \qquad ( \forall \phi \in X^s )\]
holds, where $ \langle \cdot, \cdot \rangle $ is the standard $L^2$ inner product. 
\label{def_vd}
\end{definition}

When the funcitonal $ \mathcal{H} $ is defined by $ \mathcal{H} : v \mapsto \int_{\Sb} G (v,v^{(1)},\dots,v^{(k)}) \rd x$,
its variaitonal derivative can be calculated by
\begin{equation*}
\frac{\delta \mathcal{H}}{\delta v} (v) = \sum_{i=0}^k ( - \partial_x )^{i} \frac{\partial G}{\partial v^{(i)}} (v,v^{(1)},\dots,v^{(k)}) ,
\end{equation*}
where $ v^{(i)} $ is the $i$th derivative of $v$ for $i \ge 1$ and $ v^{(0)} = v $. 
We often use the abbreviation $ \delta \mathcal{H} / \delta v $ for simplicity. 

Furthermore, we introduce the generalized inverse of a linear operator between Hilbert spaces (see, e.g., \cite{IsraelGreville2003}). 
Here, for a linear operator $L : Y_1 \to Y_2 $ between two Hilbert spaces $Y_1, Y_2$, 
$ \Dom (L) \subseteq Y_1 $ and $ \Range (L) \subseteq Y_2$ denote the domain and range of $L$, respectively. 
For a closed subspace $A$ of a Hilbert space $ Y_1 $, $ P_{A} :Y_1 \to Y_1$ denotes the orthogonal projector on $ A $ (i.e.,  $ \Range (P_A) = A $ holds).

\begin{definition}[Tseng generalized inverses] \label{def_tgi}
Let $ L :Y_1 \to Y_2 $ be a linear operator. 
Then a linear operator $ \ginv{L} : Y_2 \to Y_1 $ is a \emph{Tseng generalized inverse} of $L$ if it satisfies the following four conditions: 
\begin{align*}
&\Range (L) \subseteq \Dom (\ginv{L}) ,&
&\Range (\ginv{L}) \subseteq \Dom (L) ,\\
&\ginv{L} L x = P_{\overline{\Range(\ginv{L})}} x \quad ( x \in \Dom (L) ),&
&L \ginv{L} x = P_{\overline{\Range(L)}} y \quad  (y \in \Dom (\ginv{L}) ).
\end{align*}
\end{definition}

Moreover, we introduce the null space $ \Null (L) $ and the career $ \Car (L) $ of $L$, i.e., $ \Null (L) = \{ x \in \Dom (L) \mid L x =0 \} $, $  \Car (L) = \Dom (L) \cap \Null (L)^{\perp} $, 
where $ A^{\perp} $ stands for the orthogonal complement of a linear subspace $A$. 

\begin{lemma}[\protect{\cite[Chapter 9, Lemma~3]{IsraelGreville2003}}]
If $ \ginv{L} $ is a Tseng generalized inverse of $L$, then  
$ \Null (L) = \Dom (L) \cap \Range (\ginv{L})^{\perp}$ and $ \Car (L) = \Range (\ginv{L})$ hold. 
\label{lem_tgi}
\end{lemma}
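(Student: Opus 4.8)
The plan is to prove both identities by unwinding the four defining conditions of the Tseng generalized inverse together with the definitions of $\Null(L)$ and $\Car(L)$, exploiting the characterization of orthogonal projectors onto closed subspaces. The two claims are naturally intertwined: since $\Car(L) = \Dom(L) \cap \Null(L)^{\perp}$ by definition, once I have established that $\Null(L) = \Dom(L) \cap \Range(\ginv{L})^{\perp}$, I can hope to extract $\Car(L) = \Range(\ginv{L})$ by taking orthogonal complements appropriately, keeping careful track of closures and domain intersections.

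For the first identity, $\Null(L) = \Dom(L) \cap \Range(\ginv{L})^{\perp}$, I would argue by double inclusion. For the inclusion $\Null(L) \subseteq \Dom(L) \cap \Range(\ginv{L})^{\perp}$, take $x \in \Null(L)$; then $x \in \Dom(L)$ by definition, and the key relation $\ginv{L} L x = P_{\overline{\Range(\ginv{L})}} x$ gives $P_{\overline{\Range(\ginv{L})}} x = \ginv{L} (Lx) = \ginv{L} 0 = 0$, so $x$ lies in the kernel of the projector $P_{\overline{\Range(\ginv{L})}}$, which is precisely $\overline{\Range(\ginv{L})}^{\perp} = \Range(\ginv{L})^{\perp}$ (the orthogonal complement is insensitive to closure). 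Conversely, if $x \in \Dom(L) \cap \Range(\ginv{L})^{\perp}$, then $P_{\overline{\Range(\ginv{L})}} x = 0$, hence $\ginv{L} L x = 0$; applying $L$ and using $L \ginv{L} y = P_{\overline{\Range(L)}} y$ with $y = Lx \in \Range(L) \subseteq \overline{\Range(L)}$ yields $L x = L \ginv{L} (Lx) = P_{\overline{\Range(L)}}(Lx) = Lx$, which is circular, so instead I would argue that $\ginv{L} L x = 0$ forces $Lx \in \Null(\ginv{L})$, and combine this with $Lx \in \Range(L) \subseteq \Dom(\ginv{L})$ to deduce $Lx = 0$. Making this last deduction rigorous is where the argument needs the most care.

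For the second identity, $\Car(L) = \Range(\ginv{L})$, the natural route is to show $\Range(\ginv{L})$ is closed and equals $\Dom(L) \cap \Null(L)^{\perp}$. From the relation $\ginv{L} L x = P_{\overline{\Range(\ginv{L})}} x$ I get that every element of $\Range(\ginv{L})$ of the form $\ginv{L} L x$ is fixed by $P_{\overline{\Range(\ginv{L})}}$, which should let me conclude $\Range(\ginv{L})$ is already closed (so $\overline{\Range(\ginv{L})} = \Range(\ginv{L})$), and that $\Range(\ginv{L}) \subseteq \Dom(L)$ by the stated condition $\Range(\ginv{L}) \subseteq \Dom(L)$. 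The inclusion $\Range(\ginv{L}) \subseteq \Null(L)^{\perp}$ would follow by combining the first identity with $\overline{\Range(\ginv{L})} = \Range(\ginv{L})$: since $\Null(L) = \Dom(L) \cap \Range(\ginv{L})^{\perp}$, any $w \in \Range(\ginv{L})$ is orthogonal to $\Range(\ginv{L})^{\perp} \supseteq \Null(L)$. For the reverse inclusion $\Car(L) \subseteq \Range(\ginv{L})$, I would take $x \in \Dom(L) \cap \Null(L)^{\perp}$, write $x = P_{\overline{\Range(\ginv{L})}} x + (x - P_{\overline{\Range(\ginv{L})}} x)$, observe the first summand equals $\ginv{L} L x \in \Range(\ginv{L})$, and show the second summand lies in $\Range(\ginv{L})^{\perp} \cap \Dom(L) = \Null(L)$ (using the first identity), which is orthogonal to $x$ and hence must vanish.

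The main obstacle I anticipate is the careful bookkeeping of domains, ranges, and closures: the Tseng conditions involve $\overline{\Range(\ginv{L})}$ and $\overline{\Range(L)}$, whereas the target statement involves the unclosed $\Range(\ginv{L})$, so a crucial intermediate lemma is that $\Range(\ginv{L})$ is in fact closed (equivalently, that $P_{\overline{\Range(\ginv{L})}}$ maps $\Dom(L)$ into $\Range(\ginv{L})$, not merely into its closure). Establishing this closedness, and rigorously ruling out the circular step flagged above when deducing $Lx = 0$ from $\ginv{L} L x = 0$, are the two delicate points; everything else is a routine manipulation of projector identities and orthogonal complements. Since this is precisely the classical result cited as \cite[Chapter 9, Lemma~3]{IsraelGreville2003}, I expect the cleanest exposition to mirror that reference's handling of the closedness issue.
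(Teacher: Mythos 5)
The paper offers no proof of this lemma at all: it is quoted verbatim from Ben-Israel--Greville \cite[Chapter~9, Lemma~3]{IsraelGreville2003}, so your proposal has to stand on its own rather than be measured against an in-paper argument. Your double-inclusion skeleton is the right one, but both of the points you single out as delicate are mishandled. First, the step you dismiss as ``circular'' is in fact the proof. For $x \in \Dom(L) \cap \Range(\ginv{L})^{\perp}$ you have $\ginv{L}Lx = P_{\overline{\Range(\ginv{L})}}x = 0$; now apply $L$ and evaluate \emph{both} sides: $0 = L(\ginv{L}Lx) = P_{\overline{\Range(L)}}(Lx) = Lx$, the last equality because $Lx \in \Range(L) \subseteq \overline{\Range(L)}$ is fixed by that projector. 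You simply forgot to substitute $\ginv{L}Lx = 0$ on the left before simplifying the right, which is why you saw the tautology $Lx = Lx$. Your fallback route is the same computation in disguise: if $y \in \Range(L)$ and $\ginv{L}y = 0$, then $y = P_{\overline{\Range(L)}}y = L\ginv{L}y = 0$, so $\ginv{L}$ is injective on $\Range(L)$. The step you leave as ``where the argument needs the most care'' is therefore a one-liner, but as written your proof of the first identity is not finished.

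Second, your proposed ``crucial intermediate lemma'' that $\Range(\ginv{L})$ is closed is false in general, not equivalent to your parenthetical reformulation, and not needed. Condition (3) of Definition~\ref{def_tgi} gives $P_{\overline{\Range(\ginv{L})}}x = \ginv{L}Lx \in \Range(\ginv{L})$ for every $x \in \Dom(L)$ immediately, i.e.\ the projector does map $\Dom(L)$ into $\Range(\ginv{L})$; but this only yields $\Range(\ginv{L}) = \overline{\Range(\ginv{L})} \cap \Dom(L)$, i.e.\ relative closedness in $\Dom(L)$. If $\Dom(L)$ is not closed the range of $\ginv{L}$ need not be closed either: take $L$ to be the identity restricted to a dense proper subspace $D$ of a Hilbert space, with $\ginv{L} = \mathrm{id}_D$; all four Tseng conditions hold, yet $\Range(\ginv{L}) = D = \Car(L)$ is not closed (the lemma of course still holds). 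In the paper's application $\Dom(\partial_x) = X^s$ is the whole space, so closedness happens to be true there, but the lemma is stated for general $L$. Fortunately your argument for $\Car(L) = \Range(\ginv{L})$ never needs closedness: for $\supseteq$ use condition (2) and the first identity as you say; for $\subseteq$ decompose $x = \ginv{L}Lx + z$ with $z = x - P_{\overline{\Range(\ginv{L})}}x \in \Dom(L) \cap \Range(\ginv{L})^{\perp} = \Null(L)$, and conclude $\|z\|^2 = \langle z, x\rangle = 0$, the first equality because $z \perp P_{\overline{\Range(\ginv{L})}}x$ for any orthogonal projector and the second because $x \in \Null(L)^{\perp}$. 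With the one-line repair above and the closedness claim deleted, the proof is complete.
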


We also define the maximal generalized inverse operator as follows. 

\begin{definition}[Maximal Tseng generalized inverse]
Let $L:Y_1 \to Y_2$ be a linear operator. 
Then, a linear operator $ \pinv{L} : Y_2 \to Y_1 $ is called a \emph{maximal Tseng generalized inverse operator} if 
it is a Tseng generalized inverse operator satisfying $ \Dom ( \pinv{L} ) = Y_2 $. 
\label{def_mtgi}
\end{definition}

Note that $ \ginv{L} y = \pinv{L} y $ holds for any $ y \in \Range(L)$. 
This can be checked by using $ x \in \Dom (L) $ such that $ y = Lx $ as follows: 
$ \ginv{L} y = \ginv{L} L x = \pinv{L} L x = \pinv{L} y$. 
We will use this property in Section~\ref{sec_mderivative}. 

Now, let us introduce Tseng generalized inverse operators of the spatial differential operator 
$ \partial_x : X^s \to X^{s-1} $ (see, e.g., \cite[Example 1]{H1961}). 
Here, we assume $ s \ge 1 $.  
Note that, the differential operator satisfies 
$ \Dom (\partial_x) = X^s $, $ \Range (\partial_x ) = \check{X}^{s-1} $, $ \Car (\partial_x) = \check{X}^s $, and 
$ \Null (\partial_x ) = \{ \alpha \mathbf{1} \in X^s \mid \alpha \in \RR \} $. 
Here, $ \mathbf{1} $ denotes a constant function satisfying $ \mathbf{1} (x) = 1 \ (x \in \Sb)$.
Thus, by Definition~\ref{def_tgi} and Lemma~\ref{lem_tgi}, each generalized inverse operator $ \pid $ satisfies
\begin{align*}
&\Range ( \partial_x ) \subseteq \Dom ( \pid ), &
&\Range ( \pid) = \Car(\partial_x) = \check{X}^s, \\
&\pid \partial_x v = P_{\check{X}^s} v \quad (v \in X^s), &
&\partial_x \pid w = P_{\check{X}^{s-1}} w \quad (w \in \Dom ( \pid )).
\end{align*}

It should be noted that, since $ \Dom (\pid) $ should be a linear subspace of $ X^{s-1} $ and $ \dim \Range ( \partial_x )^{\perp} = 1 $, 
$ \Dom (\pid) = \check{X}^{s-1} $ or $ \Dom (\pid) = X^{s-1} $ hold. 
This fact implies that there are only two Tseng generalized inverse operators, 
because the Tseng generalized inverse is uniquely determined by its domain. 

The Tseng generalized inverse $\pid$ satisfying $ \Dom (\pid) = \Range (\partial_x) = \check{X}^{s-1} $
 can be concretely expressed as 
\[ (\pid v ) ( x) := \int^x_0 v(y) \rd y - \frac{1}{2 \pi} \int_{\Sb} \int^z_0 v(y) \rd y \rd z \qquad ( v \in \check{X}^{s-1} ), \] 
which coincides with $ \check{\partial}_x^{-1} $ (see, \eqref{eq_anti_linear}) introduced by Hunter~\cite{H1990}. 
Since $ \int^x_0 v(y) \rd y $ is not periodic if $ v $ is not zero-mean, 
the definition above ceases to work for $ v \in X^{s-1} \setminus \check{X}^{s-1} $ 
so that this operator is not maximal. 

On the other hand, the maximal Tseng generalized inverse $\adps$ can be expressed by using the Fourier series as follows: 
\begin{align*}\label{eq_inv_linear_PS}
\adps v(x) &:= \sum_{- \infty < k < \infty, \, k \neq 0 } \frac{\hat{v}(k)}{k \im} \exp \left(k \im x \right),&
\hat{v} (k) &:= \frac{1}{ 2 \pi} \int_{\Sb} v(x) \exp \left( - k \im x \right) \rd x, 
\end{align*}
where $ \im $ is the imaginary unit. 
In fact, this operator was already introduced by Yaguchi--Matsuo--Sugihara~\cite{YMS2010} as 
an alternative to $ \check{\partial}_x^{-1} $. 
They used $ \adps $ in order to describe the pseudospectral method for the Ostrovsky equaiton~\eqref{eq_Ostrovsky} as a discretization of it. 

In what follows, we use the symbol $ \check{\partial}_x^{-1} $ and $ \adps $ when we need to indicate each of the specific Tseng generalized inverses, 
while $ \pid $ is employed when we allow both. 

\begin{remark}
Although the concrete examples of Tseng generalized inverses of the spatial differential operator 
had been used for various equations in the literature, 
no one has explicitly described that they can be regarded as a Tseng generalized inverse, to the best of the present authors' knowledge. 
\end{remark}

The following lemma is an immediate corollary of Lemma~\ref{lem_tgi}: 

\begin{lemma}
For any $ v \in X^s $ and any Tseng generalized inverse operator $ \pid $ of $ \partial_x $, 
there exists a constant $ c \in \RR $ such that
$v (x)= \pid \partial_x v (x) + c $ holds for any $ x \in \Sb$.
%\[ v (x)= \pid w (x) + c  \quad ( x \in \Sb). \]
\label{lem_inv}
\end{lemma}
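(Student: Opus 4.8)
The plan is to read off the identity from the orthogonal-projection characterization of the Tseng generalized inverse that was recorded just before the lemma, and then to identify the relevant orthogonal complement explicitly. Recall that for the spatial differential operator the excerpt already establishes
\[ \pid \partial_x v = P_{\check{X}^s} v \qquad (v \in X^s), \]
since $\overline{\Range(\pid)} = \Car(\partial_x) = \check{X}^s$ is closed. Consequently the quantity I must control is
\[ v - \pid \partial_x v = (I - P_{\check{X}^s}) v = P_{(\check{X}^s)^{\perp}} v, \]
and the entire claim reduces to showing that $P_{(\check{X}^s)^{\perp}} v$ is a constant function, i.e. that the orthogonal complement of $\check{X}^s$ inside $X^s$ is exactly the line of constant functions.

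The key step is this identification of $(\check{X}^s)^{\perp}$. First I would invoke \cref{lem_tgi} together with the facts recorded for $\partial_x$, namely $\Range(\pid) = \Car(\partial_x) = \check{X}^s$ and $\Null(\partial_x) = \{ \alpha \mathbf{1} \mid \alpha \in \RR \}$. By the definition of the career, $\check{X}^s = \Car(\partial_x) = \Dom(\partial_x) \cap \Null(\partial_x)^{\perp} = \Null(\partial_x)^{\perp}$, where the orthogonal complement is taken inside $X^s$ (this is legitimate because $\Null(\partial_x) \subseteq X^s$). Taking orthogonal complements once more and using that $\Null(\partial_x)$ is a closed, one-dimensional subspace, so that the double complement returns it, I obtain $(\check{X}^s)^{\perp} = \Null(\partial_x) = \{ \alpha \mathbf{1} \mid \alpha \in \RR \}$.

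Combining the two steps, $P_{(\check{X}^s)^{\perp}} v$ lies in the line of constant functions, hence equals $c\, \mathbf{1}$ for some $c \in \RR$. Substituting back gives $v(x) = \pid \partial_x v(x) + c$ for every $x \in \Sb$, which is the assertion. I would also remark that the argument is uniform over the two admissible Tseng generalized inverses (those with domain $\check{X}^{s-1}$ and with domain $X^{s-1}$), since both satisfy $\Range(\pid) = \check{X}^s$ and therefore yield the same projector $P_{\check{X}^s}$ in the identity above.

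The only genuine point requiring care—and the place I expect the main (albeit mild) obstacle—is the orthogonality bookkeeping in the second step: one must be careful that the complement $\Null(\partial_x)^{\perp}$ is computed within the Hilbert space $X^s$ under its standard inner product, and verify that constants are indeed $H^s$-orthogonal to zero-mean functions. This is immediate because for a constant $\alpha \mathbf{1}$ all derivative contributions to the $H^s$ inner product vanish, leaving only $\alpha \int_{\Sb} v \, \rd x$, which is zero precisely when $v \in \check{X}^s$; thus the identification of $(\check{X}^s)^{\perp}$ with the constants is consistent, and no further estimates are needed.
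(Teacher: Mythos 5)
Your proof is correct and follows essentially the same route as the paper: both reduce the claim to the identity $\pid \partial_x v = P_{\overline{\Range(\pid)}} v = P_{\check{X}^s} v$ from Definition~\ref{def_tgi} and Lemma~\ref{lem_tgi}. The only difference is that you spell out why $v - P_{\check{X}^s} v$ is a constant function (by identifying $(\check{X}^s)^{\perp}$ with $\Null(\partial_x)$), a step the paper's proof asserts without comment.
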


\begin{proof}
By Lemma~\ref{lem_tgi} and Definition~\ref{def_tgi}, for any $ v \in X^s $ it holds that 
\[ (\pid \partial_x  v) (x) = \left( P_{\overline{\Range ( \pid )}} v \right) (x) = \left(  P_{\check{X}^s} v \right) (x) = v (x) - c \quad  (  x \in \Sb  )\]
for some constant $ c \in \RR $, 
which proves the lemma.  
\end{proof}

\section{Derivation of integral forms}
\label{sec_der}

In Section~\ref{subsec_der_lin}, 
we review the derivation of the integral form of linear case~\eqref{eq_linear}, 
which has been already known in the literature. 
Then, we propose a new procedure for the nonlinear case in Section~\ref{subsec_der_nl}. 
Finally, Section~\ref{subsec_wp_sg} is devoted to an example of applications of the proposed procedure: 
we prove the global well-posedness of the sine-Gordon equation~\eqref{eq_SG}.

\subsection{Linear case}
\label{subsec_der_lin}

Here, we show the derivation of integral forms of equations in the form~\eqref{eq_linear}. 
This can be done in the manner similar to the case of the reduced Ostrovsky equation~\eqref{eq_rOstrovsky}. 
However, since there is no explicit explanation in Hunter~\cite{H1990} and its followers, 
we here show our understanding. 

For simplicity, we use the abbreviation such as $ h(u) $ which stands for $ h(u,u_x,\dots) $ in what follows. 
In this section, let us assume the initial condition $ u_0 $ satisfies $ \int_{\Sb} u_0 (x) \rd x = 0 $, the map $ h $ is $ h : X^{s+k} \to X^{s+1} $ for some positive integer $k$, and $ s $ is a positive integer. 

\begin{remark}
	As said in \cref{sec_intro,sec_pre}, $ \check{\partial}_x^{-1} $ can be regarded as a special case of Tseng generalized inverses of the differential operator. 
	However, this fact has not been stated in the literature. 
	Thus, we forget this fact at the moment, and show how $ \check{\partial}_x^{-1} $ naturally appears. 
	At the same time, we describe the meaning of the unusual notation. 
	That is, as we show below, the inverse operator $ \check{\partial}_x $ of a restricted differential operator. 
	(The derivation below will be rephrased by using the concept of Tseng generalized inverse later on: Example~\ref{ex_II}.)
\end{remark}

Let us assume that $ u \in C ([0,T);X^{s+k} (\Sb)) \cap C^1 ([0,T);X^s) $ is a solution of the differential form~\eqref{eq_linear} satisfying the initial condition (cf. \cite[Lemma~1]{LPS2010} for the reduced Ostrovsky equation).  
Then, as mentioned in \cref{sec_intro},  $ \int_{\Sb} u(t,x) \rd x = 0 $ is satisfied. 
Hereafter, for $ t \in [0,T)  $, we use the notation $ u(t) $ denoting the element in $ X^{s+k}  $ such that $ ( u(t) ) (x) = u(t,x) $ holds for any $ x \in \Sb $. 

This linear constraint implies $ \int_{\Sb} u_t (t,x) \rd x = 0 $ so that $ u_t (t) + h_x (u(t)) \in \check{X}^s $ holds
(recall that $ \check{X}^s = \{ v \in X^s \mid \int_{\Sb} v (x) \rd x = 0 \} $). 
Thus, the spatial differential operator $ \partial_x $ operating on $ u_t (t) + h_x (u(t)) $ can be replaced by the 
restricted operator $ \check{\partial}_x := \partial_x | \check{X}^s $ (i.e., $ \check{\partial}_x:  \check{X}^s \to X^{s-1} $). 
In other words, equation~\eqref{eq_linear} can be rewritten in the form 
\[ \check{\partial}_x  \left( u_t (t) + h_x (u(t)) \right) = u. \]
Since the restricted operator $ \check{ \partial }_x $ is injective, 
there exists the inverse operator $ \left( \check{\partial}_x \right)^{-1} : \check{X}^{s-1} \to \check{X}^s $, 
which coincides with the operator $ \check{\partial}_x^{-1} $. 
Summing up,  we obtain the integral form
\begin{equation}\label{eq_linear_int}
u_t (t) + h_x (u(t)) = \check{\partial}_x^{-1} u (t). 
\end{equation}

The key of the reformulation above is the exquisite linear subspace $ \check{X}^s $. 
This linear subspace satisfies
\begin{enumerate}
	\item[(a)] $ u_t (t) + h_x (u(t)) \in \check{X}^s $ holds so that the restricted operator $ \check{\partial}_x $ can be used; 
	\item[(b)] the restricted operator $ \check{\partial}_x $ is injective so that its inverse exists. 
\end{enumerate}

\begin{remark}\label{rem_linear}
	In Introduction, we describe the unusual terminology \emph{linear case} is due to the linear implicit constraint. 
	However, in fact, this terminology has stronger meaning as shown above: 
	in linear case, all discussions can be done in the linear subspace $ \check{X}^s $. 
\end{remark}

\subsection{New result: nonlinear case}
\label{subsec_der_nl}

Our purpose here is to devise the procedure to derive the integral form for general case~\eqref{eq_ivp}. 
However, if we try to extend the strategy in the previous section,  
we should consider ``tangent space of the Hilbert manifold $ \{ v \in X^s \mid \mathcal{F} (v) = 0 \} $ at $ u(t) $'' in place of $ \check{X}^s $ (see, e.g., \cite{AMR1988} for details on infinite-dimensional manifold), and discuss the invertibility of the restriction of the differential operator on it. 
This is not impossible (cf. \cite{SatoMT}), but mathematically too complicated. 
Therefore, in \cref{subsub_proc}, we show a simple and effective approach by using the concept of the generalized inverse. 
After that, we illustrate the geometric interpretation of the proposed procedure in \cref{subsec_geometric}.

\subsubsection{Proposed procedure}
\label{subsub_proc}

We suppose $ \mathcal{F} (u_0) = \int_{\Sb} f(u_0) (x) \rd x = 0 $, $ s $ is a positive integer, and 
there exists a variational derivative $ \delta \mathcal{F} / \delta u $ of $ \mathcal{F} $ in this section. 
Moreover, we assume there exists a solution $ u \in C^0 ([0,T]; X^{s+k}) \cap C^1 ( [0,T]; X^{s} ) $ of the initial value problem~\eqref{eq_ivp}, 
where $k$ is a nonnegative integer such that 
$ f: X^{s+k} \to X^{s-1} $ and $ g : X^{s+k} \to X^s $. 

\begin{remark}
Note that, at the moment, we do not necessarily know whether the assumption on existence of a solution are satisfied. 
As we will illustrate in \cref{subsec_wp_sg}, 
the well-posedness of the original differential form should be discussed 
after the well-posedness of the derived integral form is successfully established. 
%we do not necessarily know whether these assumptions are satisfied in the PDEs~\eqref{eq_ivp}, 
%even in some concrete examples below 
%We here emphasize that what we seek here is how we transform them when well-posed, 
%and not the well-posedness itself. 
%It should be noted that, all well-posedness results for the class~\typeI are proved for their integral forms. 
\end{remark}

By operating on the both sides of
\begin{equation}\label{eq_nl}
\partial_x \left( u_t (t) + g(u(t) ) \right) = f(u(t))
\end{equation}
with a Tseng generalized inverse operator $ \pid : X^{s-1} \to X^s$, 
for any $ t \in (0,T) $, we see
\begin{equation}
u_t (t) + g (u(t)) = \pid f(u(t)) + c(t) \mathbf{1},
\label{eq_inter}
\end{equation}
where $ c(t) $ does not depend on $x$ from Lemma~\ref{lem_inv}. 
In some happy cases, the implicit constraint $ \mathcal{F}(u(t)) = 0 $ enables us to determine the value of $ c(t)$ as follows. 

Since the value of $ \mathcal{F} (u(t)) $ is always $0$ and $ u(t) $ satisfies 
%\begin{equation}\label{eq_key}
%\frac{\rd}{\rd t} \mathcal{F} (u(t)) 
%= \int_{\Sb} \frac{\delta \mathcal{F}}{\delta u }  u_t \rd x
%= \int_{\Sb} \frac{\delta \mathcal{F}}{\delta u}  \left( - g(u) + \pid f(u) \right) \rd x + c(t) \int_{\Sb} \frac{\delta \mathcal{F}}{\delta u}  \rd x,
%\end{equation}
\begin{equation}\label{eq_key}
\frac{\rd}{\rd t} \mathcal{F} (u(t)) 
= \left\langle \frac{\delta \mathcal{F}}{\delta u } , u_t \right\rangle
= \left\langle \frac{\delta \mathcal{F}}{\delta u} , - g(u) + \pid f(u) \right\rangle + c(t) \left\langle \frac{\delta \mathcal{F}}{\delta u} , \mathbf{1} \right\rangle,
\end{equation}
the value of $c(t) $ is determined 
%\begin{equation}
%c(t) = \mathcal{C} (u(t)) := \frac{ \int_{\Sb} \frac{\delta \mathcal{F}}{\delta u}  \left( g(u) - \pid f(u) \right) \rd x }{\int_{\Sb} \frac{\delta \mathcal{F}}{\delta u} \, \rd x}
%\label{eq_def_const}
%\end{equation}
for $ t \in (0,T)$ satisfying  
%\begin{equation}
%\int_{\Sb} \frac{\delta \mathcal{F}}{\delta u} \rd x \neq 0 ,
%\label{eq_condition}
%\end{equation}
\begin{equation}
\left\langle \frac{\delta \mathcal{F}}{\delta u} , \mathbf{1} \right\rangle  \neq 0 . 
\label{eq_condition}
\end{equation}
In this case, we obtain the desired integral form
\begin{equation}\label{eq_nl_int}
u_t (t) + g(u(t)) = \pid f(u(t)) + \mathcal{C} (u(t)),
\end{equation}
where
\begin{equation}\label{eq_def_const}
\mathcal{C} (v) := \frac{ \left\langle \frac{\delta \mathcal{F}}{\delta u}  , g(u) - \pid f(u) \right\rangle }{ \left\langle \frac{\delta \mathcal{F}}{\delta u} , \mathbf{1} \right\rangle}. 
\end{equation}

By construction, the following proposition holds. 
\begin{proposition}\label{prop_ic}
	For any solution $u$ of~\eqref{eq_nl_int} and $ t \in (0,T) $ satisfying~\eqref{eq_condition}, $ (\rd / \rd t) \mathcal{F} (u(t)) = 0 $ holds. 
	In particular, if $ \mathcal{F} (u_0) = 0 $ holds and \eqref{eq_condition} holds for any $ t \in (0,T) $, 
	then $ \mathcal{F} (u(t)) = 0 $ holds for any $ t \in [0,T) $. 
\end{proposition}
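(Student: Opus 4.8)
The plan is to treat the first claim as a direct verification that the scalar $\mathcal{C}$ in~\eqref{eq_def_const} has been engineered precisely to annihilate $(\rd/\rd t)\mathcal{F}(u(t))$, so that essentially no new work is needed beyond the algebra already displayed in~\eqref{eq_key}. First I would note that, since $u \in C^1([0,T];X^s)$ and the variational derivative $\delta\mathcal{F}/\delta u$ exists, the scalar function $t \mapsto \mathcal{F}(u(t))$ is differentiable and the chain rule yields $\frac{\rd}{\rd t}\mathcal{F}(u(t)) = \langle \delta\mathcal{F}/\delta u, u_t \rangle$, which is exactly the first equality in~\eqref{eq_key}. This is the only step that is not pure algebra: it rests on the defining property of the variational derivative in Definition~\ref{def_vd} combined with the $C^1$ regularity of the solution, and I expect the justification of this chain rule to be the main point requiring care.

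Next I would substitute $u_t = -g(u) + \pid f(u) + \mathcal{C}(u(t))\mathbf{1}$, read off from the integral form~\eqref{eq_nl_int}, into the expression above and expand by bilinearity of the inner product, obtaining
\[
\frac{\rd}{\rd t}\mathcal{F}(u(t)) = \left\langle \frac{\delta\mathcal{F}}{\delta u}, -g(u) + \pid f(u)\right\rangle + \mathcal{C}(u(t))\left\langle \frac{\delta\mathcal{F}}{\delta u}, \mathbf{1}\right\rangle.
\]
Since~\eqref{eq_condition} guarantees $\langle \delta\mathcal{F}/\delta u, \mathbf{1}\rangle \neq 0$, the scalar $\mathcal{C}(u(t))$ defined by~\eqref{eq_def_const} is well-defined, and inserting it makes the second term equal $\langle \delta\mathcal{F}/\delta u, g(u) - \pid f(u)\rangle$, which is the exact negative of the first term. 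Hence the two terms cancel and $(\rd/\rd t)\mathcal{F}(u(t)) = 0$, settling the first assertion.

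For the second assertion I would invoke the first: under the standing hypothesis that~\eqref{eq_condition} holds for every $t \in (0,T)$, the function $t \mapsto \mathcal{F}(u(t))$ has vanishing derivative throughout $(0,T)$ and is therefore constant on that interval. Its value is pinned down by continuity at the left endpoint: because $u \in C^0([0,T];X^{s+k})$ and $\mathcal{F}$ is continuous, one has $\lim_{t\to 0^+}\mathcal{F}(u(t)) = \mathcal{F}(u_0) = 0$, so the constant value is $0$ and $\mathcal{F}(u(t)) = 0$ holds on all of $[0,T)$. Beyond the chain rule, the only delicate points are ensuring the constant is well-defined, which is precisely what~\eqref{eq_condition} provides, and the short continuity argument needed to transport the conserved value from the open interval to the endpoint $t = 0$.
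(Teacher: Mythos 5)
Your proof is correct and follows exactly the route the paper intends: the paper dismisses this proposition with ``by construction,'' and your argument simply makes explicit the cancellation between $\langle \delta\mathcal{F}/\delta u, -g(u)+\pid f(u)\rangle$ and $\mathcal{C}(u)\langle \delta\mathcal{F}/\delta u,\mathbf{1}\rangle$ already displayed in~\eqref{eq_key}--\eqref{eq_def_const}, plus the standard constancy-and-continuity step for the second claim. No gaps; this is the same proof, just written out.
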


On the other hand, for $ t \in (0,T )$  such that $ \langle \delta \mathcal{F} / \delta u , \mathbf{1} \rangle = 0 $ holds, we obtain a new implicit constraint $ \mathcal{F}_1 (u(t)) = 0 $, where 
%\begin{equation}
%\mathcal{F}_1 (v)  = \int_{\Sb} \frac{\delta \mathcal{F}}{\delta v} \left( g(v) - \pid f(v) \right) \rd x = 0 
%\label{eq_ic1}
%\end{equation}
\begin{equation}
\mathcal{F}_1 (v)  := \left\langle \frac{\delta \mathcal{F}}{\delta v} , g(v) - \pid f(v) \right\rangle 
\label{eq_ic1}
\end{equation}
from~\eqref{eq_key}. 
When there exists an open interval $ (t_0,t_1) \subseteq (0,T) $ such that 
$ \langle \delta \mathcal{F} / \delta u , \mathbf{1} \rangle = 0 $ holds for any $ t \in (t_0,t_1) $, we can continue the same line of discussion above by using $ \mathcal{F}_1 $ instead of $ \mathcal{F} $. 
Otherwise, $ c(t) $ will be determined by continuity. 
As a result, the conservation law such as Proposition~\ref{prop_ic} holds even for this case, 
which can be shown in the similar manner.  
However, since the statement is quite cumbersome and there is no physical example in this case as we show below, 
the detailed discussion is omitted. 

%\begin{proposition}\label{prop_ic}
%	Suppose that, for some $ t \in [0,T) $ and nonnegative integer $ i^{\ast} $,
%	\[ \left\langle \frac{\delta \mathcal{F}_{i^{\ast}} }{\delta u} (u(t)) , \mathbf{1} \right\rangle \neq 0 \]
%	holds, where
%	\[ \mathcal{F}_{i} (v) := \left\langle \frac{\delta \mathcal{F}_{i-1} }{\delta v} , g(v) - \pid f(v) \right\rangle \qquad (i=1,\dots,i^{\ast}) \]
%	and $ \mathcal{F}_0 = \mathcal{F} $. 
%	Then, $\mathcal{F}_i (u(t)) = 0 $ holds for any $ i = 0,1,\dots ,i^{\ast} $. 
%\end{proposition}

Now, in the example below, 
we show how the derivation of the integral form for linear cases~\eqref{eq_linear} in the previous section can be understood in the proposed procedure. 

\begin{example}[Linear case~\eqref{eq_linear} revisited]\label{ex_II}
The implicit constraint 
$ \mathcal{F} (u) = \int_{\Sb} u \, \rd x =  0 $
is linear, and it holds that $ \langle \delta \mathcal{F} / \delta u , \mathbf{1} \rangle = 2 \pi \neq 0$. 
Therefore, we see
\begin{equation*}
\mathcal{C} (v) = \frac{1}{2\pi} \left( \langle \mathbf{1} ,  h_x (v) \rangle - \langle \mathbf{1} ,  \pid v \rangle \right) = 0, 
\end{equation*}
where the last equality holds by $ \Range (\pid) = \check{X}^s $. 
Thus, it coincides with the transformation by Hunter~\cite{H1990} and its followers if we employ $ \check{\partial}_x^{-1} $ as the generalized inverse $ \pid $.
\end{example} 

%\begin{example}[Class~\typeIII]
%In this case, the implicit constraint is again linear. 
%However, we see
%\begin{equation*}
%\mathcal{C} (u) := \frac{ \int_{\Sb} \left( g (u) - \pid u \right) \rd x }{ 2 \pi } = \frac{1}{2 \pi} \int_{\Sb} g(u) \rd x, 
%\end{equation*}
%and it does not vanish in general. 
%This procedure is quite simple 
%in comparison with the cumbersome argument described in Introduction for deriving the integral form in 
%which we should carefully consider each term is in $ \check{X}^{s-1} $ or not. 
%Notice that the new procedure does not cause the unnatural term $ \check{\partial}_x^{-1} \partial_x g$. 
%\label{ex_III}
%\end{example}

For the nonlinear case, 
whether the condition~\eqref{eq_condition} holds or not 
depends on each solution in general. 

\begin{example}
If we consider the nonlinear Klein--Gordon equation of the form
\[ u_{tx} = u + u^2, \]
we obtain $ \delta \mathcal{F} / \delta u = \mathbf{1} + 2 u $. 
Then, the value of $ \langle \mathbf{1},  \mathbf{1} + 2 u \rangle $ depends on time. 
\end{example}

Fortunately, however, physically meaningful equations somehow tend to satisfy the condition~\eqref{eq_condition} for any $ t \in (0,T) $ thanks to some associated conservation laws 
as far as the initial condition $u_0$ satisfies $ \langle  \delta \mathcal{F} / \delta u ( u_0 ) , \mathbf{1} \rangle \neq 0 $.  
For example, the sine-Gordon equation (Section~\ref{subsec_wp_sg}), the modified Hunter--Saxton equation (Example~\ref{ex_mHS}), and the modified short pulse equation below. 

%Fortunately, the equations listed in Section~\ref{subsec_IV} satisfy the condition~\eqref{eq_condition} for any $ t \in [0,T] $ as follows thanks to some associated conservation laws 
%as far as the initial condition $u_0$ satisfies $ \int_{\Sb} \delta \mathcal{F} / \delta u ( u_0 ) \, \rd x \neq 0 $.  

\begin{example}%[Modified short pulse equation]
The modified short pulse equation 
\begin{equation}\label{eq_mSP}
	u_{tx} = u + \frac{1}{2} u \left( u^2 \right)_{xx}. 
\end{equation}
has the nonlinear implicit constraint 
\[ \mathcal{F} (u) = \int_{\Sb} \left( u + \frac{1}{2} u \left(u^2 \right)_{xx} \right) \rd x = \int_{\Sb} \left( u - u u_x^2 \right) \rd x.  \]
But in this case, since 
\[ \frac{\rd}{\rd t} \int_{\Sb} \frac{1}{2} u_x^2 \rd x = \int_{\Sb} u_x u_{xt} \rd x = \int_{\Sb} u_x \left( u + \frac{1}{2} u \left(u^2 \right)_{xx} \right) \rd x = 0 \]
and 
\begin{equation*}
\left\langle \frac{\delta \mathcal{F}}{\delta u } , \mathbf{1} \right\rangle = \left\langle 1 - u_x^2 + \left( 2 u u_x \right)_x , \mathbf{1} \right\rangle
= 2 \pi - \int_{\Sb} u_x^2 \rd x, 
\end{equation*}
the condition~\eqref{eq_condition} holds for any $ t \in (0,T) $ if $ 2 \pi \neq \int_{\Sb}  \left( (u_0)_x \right)^2 \rd x $. 
%unless the initial condition $ u_0 $ satisfies $ 2 \pi = \int_{\Sb}  (u_0)_x^2 \rd x $. 
\label{ex_mSP}
\end{example}

\begin{remark}
	After our preprint~\cite{SM2017+} had been published, 
	Li--Yin~\cite{LY2017_2} showed the local well-posedness and global existence of the modified short pulse equation 
	via the reformulation, which essentially coincides with the integral form derived by our procedure. 
	There, though it seems that the similar transformation was conducted, 
	how to determine the constant $ c(t) $ (they call it ``boundary term'') is unclear and just said ``carefully selected''
	(see, also \cite{LY2017_1} for the similar work for the Hunter--Saxton-type equation). 
	In view of this, their work indicates that the proposed procedure is an important tool for this kind of PDEs. 
\end{remark}

%We show that the modified Hunter--Saxton equation also satisfies the condition~\eqref{eq_condition} later (Example~\ref{ex_mHS}).
%
%\vspace{0.5cm}

Next, let us consider what happens when we cannot ensure $ \langle \delta \mathcal{F} / \delta u , \mathbf{1} \rangle \neq 0 $ 
through the geometric interpretation.

\subsubsection{Geometric Interpretation}
\label{subsec_geometric}

This section is devoted to illustrate the intuition of the procedure in the previous section. 
To this end, we consider its geometric interpretation. 
Roughly speaking, the proposed procedure can be viewed as the infinite-dimensional version of the 
``geometric reduction'' for finite-dimensional implicit DAEs~\cite{R1990,RR1994}, 
which gives us the local existence and uniqueness results of them. 

However, since the general well-posedness theory of PDEs~\eqref{eq_ivp} is beyond the scope of this paper, 
we do not step into the rigorous justification of the infinite-dimensional version of the geometric reduction. 
Such a justification seems to be challenging,
because, for example, the definition of the reducibility itself (see, \cite[Definition~4.2]{RR1994}) is only valid for finite-dimensional cases. 

First of all, along~\cite{RR1994}, 
we consider a reduction process for general PDAEs~(partial differential-algebraic equations) in the form 
\begin{equation}
F(u,u_t) = 0, 
\label{eq_pdae}
\end{equation}
where $ F : X^s \times X^s \to X^s $ is an arbitrary smooth map. 
When we define the Hilbert manifold $ M = F^{-1} (0) $,
the equation~\eqref{eq_pdae} is equivalent to 
\begin{equation}
(u,u_t) \in M.
\label{eq_pdae_manifold}
\end{equation}
Here, we regard $M$ as a submanifold of $ T X^s $, where $ T X^s$ stands for the tangent bundle of $X^s$. 
Suppose that the canonical projection $ W = \pi (M) $ is a submanifold of $X^s$ ($ \pi : T X^s \to X^s $ is a map such that $ \pi : (u,v) \mapsto u $). 
By definition of $W$, the solution $ u $ of the equation~\eqref{eq_pdae} satisfies $ (u,u_t ) \in TW $. 
Then, the solution $u$ of the equation~\eqref{eq_pdae} also satisfies
\begin{equation}
(u,u_t ) \in M_1 := M \cap TW. 
\end{equation}
The process obtaining $M_1$ from $M$ is called the geometric reduction, 
and we can further proceed the reduction step such as $ W_{i+1} = \pi (M_i) $, $ M_{i+1} = T W_{i+1} \cap M_i $  ($ i = 1,2,\dots$). 

Now, let us restrict ourselves to the case $F(u,v) := v_x + g_x (u) - f(u)$, i.e., the equation~\eqref{eq_nl}. 
In this case, $M$ can be explicitly written in the form 
\[ M = \left\{ (u,v) \relmiddle{|} \mathcal{F} (u) = 0, \ v = - g(u) + \pid f(u) + c \mathbf{1}  \ ( c \in \RR )  \right\} \]
(recall~\eqref{eq_inter}).
Then, the canonical projection $W$ of $M$ and its tangent bundle $ TW $ can be constructed by 
\begin{align*}
W &= \pi (M) = \left\{ u \in X \mid \mathcal{F}(u) = 0 \right\}, \\
TW &= \left\{ (u,v) \relmiddle{|} \mathcal{F} (u) = 0, \ \left. \frac{\rd}{\rd \epsilon} \mathcal{F} (u+ \epsilon v) \right|_{\epsilon = 0} = 0 \right\}.
\end{align*}
In a manner similar to that in the previous section, 
$ M_1 $ can be written in the form
\begin{equation}\label{eq_gr}
\begin{split}
M_1 &= \left\{ (u,v) \in M \relmiddle{|} \left\langle \frac{\delta \mathcal{F}}{\delta u } , \mathbf{1} \right\rangle \neq 0 , \ v = - g(u) + \pid f(u) + \mathcal{C} (u) \mathbf{1} \right\} \\
& \qquad \cup \left\{ (u,v) \in M \relmiddle{|} \left\langle \frac{\delta \mathcal{F}}{\delta u } , \mathbf{1} \right\rangle = 0 , \ \mathcal{F}_1 (u) = 0 \right\}. 
\end{split}
\end{equation}
Here, for $ u \in W $ such that $ \langle \delta \mathcal{F}/ \delta u , \mathbf{1} \rangle \neq 0 $, 
the associated tangent vector $v$ is uniquely determined 
(namely, the constant $c$ is determined by $\mathcal{C}(u)$, which is defined by~\eqref{eq_def_const}), 
while we obtain a new constraint $ \mathcal{F}_1 (u)=0 $ for $u \in W$ such that $ \langle \delta \mathcal{F}/ \delta u , \mathbf{1} \rangle = 0 $. 
At every step $i$, we obtain $ M_i $ by a similar reduction process. 

When we assume the reduction procedure is successfully well-defined, i.e., 
the sequence of the Hilbert manifolds $ \{ M_i \}_{i=1}^{\infty} $ can be defined, 
there are three possible scenarios as follows: 
%\begin{enumerate}
%\item[(i)] $ M_i = M_{i+1}$ for some finite positive integer $i$: \\
%In this case, $ M_j = M_i $ holds for any $ j > i $. 
%\begin{enumerate}
%\item For any $ u \in \pi (M_i)$, there is exactly one tangent vector $v$ such that $ (u,v) \in M_i $:\\ 
%In this case, the infinite-dimensional vector field can be uniquely determined. 
%In other words, the equation can be rewritten in the integral form for any initial conditions $ u_0 \in \pi (M_i) $. %(the initial condition is said to be consistent if it satisfies all implicit constraints). 
%\item For some $ u \in \pi(M_i) $, there are more than one tangent vector $ v$ such that $ (u,v) \in M_i $:\\
%In this case, the result of the reduction process does not provide the integral form at least for some initial conditions.  
%\end{enumerate}
%\item[(ii)] $ M_i \neq M_{i+1} $ holds for any positive integer $i$: \\
%This case is a distinctive scenario of the infinite-dimensional case (see, example~\ref{ex_infinite} below). 
%\end{enumerate}
\begin{enumerate}
\item[(a)] $ M_i = M_{i+1}$ for some finite positive integer $i$, and there is \emph{exactly one} tangent vector $v$ such that $ (u,v) \in M_i $ for any $ u \in \pi (M_i) $: \\
In this case, the infinite-dimensional vector field can be uniquely determined. 
In other words, the equation can be rewritten in the integral form for any initial conditions $ u_0 \in \pi (M_i) $. 
\item[(b)] $ M_i = M_{i+1}$ for some finite positive integer $i$, but this time there are \emph{more than one} tangent vectors $v$ such that $ (u,v) \in M_i $ for some $ u \in \pi (M_i) $: \\
In this case, the result of the reduction process does not provide the integral form at least for some initial conditions. 
\item[(c)] $ M_i \neq M_{i+1} $ holds for any positive integer $i$: \\
This case is the distinctive scenario of the infinite-dimensional case (see, example~\ref{ex_infinite} below). 
We cannot obtain the integral form by the reduction process above. 
\end{enumerate}

Below, we show an example which has the infinitely many implicit constraints, i.e., the case (c). 

\begin{example}
We consider the following PDE
\begin{equation}
u_{tx} = \frac{1}{3} u_x^3, 
\label{eq_inf}
\end{equation}
which is obviously underdetermined since if $ u (t,x) $ is a solution, then $ u (t,x) + d (t) $ is also a solution for any $ d : [0,T) \to \RR $ satisfying $ d(0) = 0 $. 
Since 
$ \langle \delta \mathcal{F} / \delta u , \mathbf{1} \rangle  =  \langle - (  u_x^2 )_x ,\mathbf{1} \rangle = 0 $ holds, 
we then proceed to obtain a new implicit constraint
\begin{align*}
\mathcal{F}_1 (u) &=  - \left\langle  \left( u_x^2 \right)_x , - \pid \left( \frac{1}{3} u_x^3 \right) \right\rangle = - \frac{1}{3} \left\langle u_x^2 , \partial_x \pid u_x^3 \right\rangle
= - \frac{1}{3} \int_{\Sb} u_x^5 \rd x.
\end{align*}
In this manner, we can repeat this procedure, and 
it is easy to verify that at every step $i$ ($ i=1,2,\dots $),  $ \mathcal{F}_i = \alpha_i \int_{\Sb}  u_x^{2i+3} \rd x $ holds for some constant $ \alpha_i$. 
In other words, $ M_i \neq M_{i+1} $ holds for any positive integer $i$. 
\label{ex_infinite}
\end{example}

Note that, when $ \langle \delta \mathcal{F} / \delta u , \mathbf{1} \rangle \neq 0 $ holds for any $ u \in W $, 
$M_1$ (defined by~\eqref{eq_gr}) can be simply expressed as 
\[ M_1 = \left\{ (u,v) \relmiddle{|}  \mathcal{F} (u) = 0,  v = - g(u) + \pid f(u) + \mathcal{C} (u) \mathbf{1}  \right\},  \] 
and $ M_i = M_1 $ holds for any positive integer $ i $. 
It should be noted that, the discussion on the linear case in \cref{subsec_der_lin} can be rephrased as this case. 
However, it is not the case for the equation~\eqref{eq_nl} in general. 
%while that on equations in the class~\typeIV cannot in general. 
Still, the modified Hunter--Saxton equation belongs to this case. 

\begin{example}\label{ex_mHS}
The modified Hunter--Saxton equation~\cite{FMN2007}
\begin{equation}\label{eq_mHS}
\left( u_{t} + \frac{1}{2} \left( u^2 \right)_{x} + \frac{\gamma}{6} u_x^3 \right)_x = u + \frac{1}{2} u_x^2,
\end{equation}
describes the propagation of short waves in a long wave model. 
For this case, the implicit constraint can be expressed as $ \mathcal{F} (u) = \int_{\Sb} \left( u + \frac{1}{2} u_x^2  \right) \rd x = 0 $. 
Since
\[ \left\langle \frac{\delta \mathcal{F}}{\delta u} , \mathbf{1} \right\rangle = \left\langle 1 - u_{xx} , \mathbf{1} \right\rangle = 2 \pi \neq 0  \]
holds for any $ u $, $ M_i = M_1 $ holds for any $i $. 
The integral form can be written in 
\[ u_t + \frac{1}{2} \left( u^2 \right)_x + \frac{\gamma}{6} u_x^3 = \ginv{\partial}_x \left( u + \frac{1}{2} u_x^2 \right) + \mathcal{C}(u), \]
where 
\begin{align*}
\mathcal{C} (v) 
&:= \frac{1}{2 \pi} \int_{\Sb} \left( 1 - v_{xx} \right) \left( \frac{1}{2} \left( v^2 \right)_x + \frac{\gamma}{6} v_x^3 - \ginv{\partial}_x \left( v + \frac{1}{2} v_x^2 \right)  \right) \rd x 
= \frac{\gamma}{12 \pi} \int_{\Sb}  v_x^3 \rd x.
\end{align*}
%where 
%\begin{align*}
%\mathcal{C} (u) 
%&= \frac{1}{2 \pi} \int_{\Sb} \left( 1 - u_{xx} \right) \left( \ginv{\partial}_x \left( u + \frac{1}{2} u_x^2 \right) -  \frac{1}{2} \left( u^2 \right)_x + \frac{\gamma}{6} u_x^3 \right) \rd x \\
%&= \frac{1}{2 \pi} \int_{\Sb} \left( \ginv{\partial}_x \left( u + \frac{1}{2} u_x^2 \right) + \frac{\gamma}{6} u_x^3 \right) \rd x \\
%& \qquad + \frac{1}{2\pi} \int_{\Sb} u_x \left( u + \frac{1}{2} u_x^2 -\frac{1}{2} \left( u^2 \right)_{xx} + \frac{\gamma}{6} \left( u_x^3 \right)_x \right) \rd x
%= \frac{\gamma}{12 \pi} \int_{\Sb}  u_x^3 \rd x.
%\end{align*}
%Here, the last equality comes from the following equalities: 
%\begin{align*}
%\int_{\Sb} u_x u \, \rd x &= \int_{\Sb} \frac{1}{2} \left(u^2 \right)_x \rd x = 0, \\
%\int_{\Sb} u_x^3 \rd x &= -\int_{\Sb} u ( u_x^2 )_x \rd x = - 2 \int_{\Sb} u u_x u_{xx} = - \int_{\Sb} ( u^2 )_x u_{xx} \rd x = \int_{\Sb} ( u^2 )_{xx} u_x \rd x, \\
%\int_{\Sb} u_x ( u_x^3 )_x \rd x &= - \int_{\Sb} u_{xx} u_x^3 \rd x = - \frac{1}{4} \int_{\Sb} \left( u_x^4 \right)_x \rd x = 0. 
%\end{align*}
\end{example}

It should be noted that, for such cases as the modified short pulse equation 
(i.e., whether the condition \eqref{eq_condition} holds or not depends on the initial condition), 
$ M_1 $ itself does not describe the whole of the vector field. 
However, if we fix the initial condition $ u_0 $ satisfying the condition, 
they can surely be rewritten in the integral form
by using one implicit constraint $ \mathcal{F} (u(t)) = 0 $. 
This can be done since the procedure in the previous section copes with the single orbit itself, 
whereas the geometric reduction described in this section is to determine the Hilbert manifold consisting of all the orbits, 
and the whole of the vector field on such a manifold.

\subsection{Application: the global well-posedness of the sine-Gordon equation}
\label{subsec_wp_sg}

In this section, in order to illustrate how the proposed procedure work, 
we establish the global well-posedness of the sine-Gordon (sG) equation~\eqref{eq_SG} in light-cone coordinates. 
It should be noted that, to the best of the present authors' knowledge, the well-posedness result can only be found in \cite[Theorem~3.1]{LY2017_2}, which is somewhat limited since it is proved by the local well-posedness of the modified short pulse equation and a reciprocal transformation between the modified short pulse equation and the sine-Gordon equation. 

Prior to the main part, note that, 
in addition to the implicit constraint $ \mathcal{F} (u(t)) = 0 $ ($ \mathcal{F} (v) := \int_{\Sb} \sin v (x) \rd x $), 
the sine-Gordon equation~\eqref{eq_SG} has the conserved quantity: 
\begin{equation}\label{eq_con}
\mathcal{H} (u(t)) = \int_{\Sb} \cos u (t,x) \rd x = \mathcal{H} (u_0) . 
\end{equation}

Here, for the initial value problem of the sine-Gordon equation
\begin{equation}\label{eq_sg}
\begin{cases}
u_{tx} = \sin u \quad & (t>0, x \in \Sb), \\
u(0,x) = u_0 (x) & (x \in \Sb), 
\end{cases}
\end{equation}
we establish the following theorem. 

\begin{theorem}\label{thm_wp_sg}
	Let $ u_0 \in H^1 (\Sb) $ be an initial data satisfying $ \mathcal{F} (u_0) = 0 $ and $ \mathcal{H} (u_0) \neq 0 $. 
	Then, there exists a unique global solution $  u \in C^1 ([0,\infty); H^1 (\Sb)) $ of~\eqref{eq_sg}. 
	%Moreover, the solution depends continuously on the initial data. 
\end{theorem}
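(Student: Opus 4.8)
The plan is to establish global well-posedness by first deriving the integral form of the sine-Gordon equation via the proposed procedure, then running a fixed-point argument for local existence, and finally extending to a global solution using the conserved quantity $\mathcal{H}$.

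First I would apply the procedure of Section~\ref{subsub_proc} to the sine-Gordon equation~\eqref{eq_sg}. Here $g \equiv 0$ and $f(u) = \sin u$, so $\mathcal{F}(v) = \int_{\Sb} \sin v\,\rd x$ and $\delta \mathcal{F}/\delta u = \cos u$. The key quantity is
\[
\left\langle \frac{\delta \mathcal{F}}{\delta u}, \mathbf{1} \right\rangle = \int_{\Sb} \cos u(t,x)\,\rd x = \mathcal{H}(u(t)) = \mathcal{H}(u_0),
\]
which is \emph{conserved} by~\eqref{eq_con}. Since the hypothesis guarantees $\mathcal{H}(u_0) \neq 0$, the condition~\eqref{eq_condition} holds at $t=0$, and crucially it continues to hold for as long as the solution exists, because $\langle \delta\mathcal{F}/\delta u, \mathbf{1}\rangle$ equals the constant $\mathcal{H}(u_0)$. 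This is exactly the ``happy case'' of Example~\ref{ex_mHS}: the constant $c(t)$ is determined by~\eqref{eq_def_const}, yielding the integral form
\[
u_t = \pid \sin u + \mathcal{C}(u), \qquad \mathcal{C}(v) = -\frac{\langle \cos v, \pid \sin v\rangle}{\int_{\Sb}\cos v\,\rd x}.
\]
I would verify that the right-hand side defines a locally Lipschitz vector field on a suitable open subset of $H^1(\Sb)$ (namely where $\mathcal{H} \neq 0$); this uses that $\pid$ is bounded from $\check{X}^0$ into $H^1$, that $v \mapsto \sin v$ and $v \mapsto \cos v$ are smooth Nemytskii operators on $H^1$ (an algebra for the relevant range), and that the denominator stays bounded away from zero near $u_0$.

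Next, local existence and uniqueness in $C^1([0,t^*); H^1)$ follow from the standard Picard–Lindelöf theorem for ODEs in Banach spaces applied to this integral form, since the vector field is locally Lipschitz. I would then confirm the equivalence with the original differential form: any solution of the integral form satisfies $\mathcal{F}(u(t)) = 0$ for all $t$ by Proposition~\ref{prop_ic} (using $\mathcal{F}(u_0)=0$ and $\mathcal{H}(u_0)\neq 0$), and conversely differentiating the integral form and applying $\partial_x$ recovers~\eqref{eq_sg}, since $\partial_x \mathcal{C}(u)\mathbf{1} = 0$ and $\partial_x \pid \sin u = P_{\check X^0}\sin u = \sin u - \frac{1}{2\pi}\int_{\Sb}\sin u\,\rd x = \sin u$ by the constraint.

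The main obstacle is global extension: I must rule out blow-up of $\|u(t)\|_{H^1}$ and of $\mathcal{C}(u(t))$ (which would occur if $\mathcal{H}(u(t))$ approached $0$) in finite time. The conservation of $\mathcal{H}$ immediately handles the denominator, since $\mathcal{H}(u(t)) = \mathcal{H}(u_0) \neq 0$ is constant, so $|\mathcal{C}(u(t))|$ is controlled by $\|u\|_{H^1}$ alone. For the $H^1$ norm itself, the hard part is an a priori energy estimate: I would bound $\frac{\rd}{\rd t}\|u(t)\|_{H^1}^2$ using $|\sin u| \le 1$, $|\cos u|\le 1$, the boundedness of $\pid$, and $|\mathcal{C}(u)| \le \|\cos u\|_{L^2}\|\pid \sin u\|_{L^2}/|\mathcal{H}(u_0)| \le C/|\mathcal{H}(u_0)|$; since $\sin$ and $\cos$ are globally bounded, one expects at most linear growth of $\|u_x\|_{L^2}$ in time, giving a finite bound on every compact time interval. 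A Gronwall-type argument then precludes finite-time blow-up, and the local solution extends to all of $[0,\infty)$, yielding the unique global solution $u \in C^1([0,\infty); H^1(\Sb))$ claimed in the theorem.
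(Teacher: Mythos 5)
Your overall strategy---pass to the integral form, treat it as an ODE in $H^1(\Sb)$, and use the conservation of $\mathcal{H}$ to keep the denominator of $\mathcal{C}$ away from zero---is sound and does lead to the theorem, but it is executed differently from the paper, and the difference matters. The paper does \emph{not} work with the vector field $v \mapsto \adps \sin v - \bigl( \langle \cos v , \adps \sin v \rangle / \mathcal{H}(v) \bigr) \mathbf{1}$, which (as in your proposal) is only locally Lipschitz and only defined where $\mathcal{H}(v)\neq 0$, and therefore forces a local-existence-plus-continuation argument with an a priori $H^1$ bound. Instead, it \emph{freezes} the denominator at the constant $\mathcal{H}_0 := \mathcal{H}(u_0)$, yielding the auxiliary problem~\eqref{eq_sg_int} whose right-hand side is globally Lipschitz on all of $H^1(\Sb)$; global existence and uniqueness (\cref{lem_wp_sg_int}) are then immediate from Picard theory with no blow-up analysis at all. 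The conservation laws are recovered \emph{a posteriori} (\cref{lem_sg_int_cl}): one computes that $\frac{\rd}{\rd t}\mathcal{F}(u)$ and $\frac{\rd}{\rd t}\mathcal{H}(u)$ satisfy a linear homogeneous system in $\bigl(\mathcal{F}(u), \mathcal{H}(u)-\mathcal{H}_0\bigr)$ with zero initial data, so both vanish identically, and only then does one conclude that $u$ solves the original equation~\eqref{eq_sg}.

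The one genuine soft spot in your version is a circularity you should resolve explicitly: you invoke conservation of $\mathcal{H}$ (via~\eqref{eq_con}) to guarantee that condition~\eqref{eq_condition} holds for all $t$, but~\eqref{eq_con} is a property of solutions of the \emph{differential} form, and \cref{prop_ic} (which gives $\mathcal{F}(u(t))=0$, hence the equivalence with the differential form) itself presupposes that~\eqref{eq_condition} holds for all $t$. This can be repaired by a bootstrap on the maximal interval where $\mathcal{H}(u(t))\neq 0$ (showing directly from the integral form, using skew-adjointness of $\adps$, that $\frac{\rd}{\rd t}\mathcal{F}=0$ and then $\frac{\rd}{\rd t}\mathcal{H}=-\mathcal{C}(u)\mathcal{F}(u)=0$ there, so the interval is all of $[0,\infty)$), but it must be said; the paper's $\mathcal{H}_0$-freezing trick exists precisely to sidestep this. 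Your a priori bound for global extension (at most linear growth of $\|u\|_{H^1}$ from $\|\sin u\|_{L^2}\le\sqrt{2\pi}$) is correct, just rendered unnecessary by the paper's route.
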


To this end, by using the proposed procedure, we derive the integral form 
\[ u_t = \adps \sin u - \frac{ \left\langle \cos u , \adps \sin u \right\rangle }{ \mathcal{H} (u) } \mathbf{1} . \]
Since $ \mathcal{H} $ is a conserved quantity of the sG equation, 
we consider the following initial value problem 
\begin{equation}\label{eq_sg_int}
\begin{cases}
{\displaystyle u_{t} (t,x) = \left( \adps \sin u (t) \right) (x) - \frac{1}{\mathcal{H}_0} \tilde{\mathcal{C}} (u(t)) \mathbf{1} }\quad & (t>0, x \in \Sb), \\
u(0,x) = u_0 (x) & (x \in \Sb), 
\end{cases}
\end{equation}
where $ \tilde{\mathcal{C}} (v) := \int_{\Sb} \cos v (x) \adps \sin v (x) \rd x $ and $ \mathcal{H}_0 \in \RR $ is a nonzero constant. 
Note that, though $ \mathcal{H}_0 $ will be selected as $ \mathcal{H}_0 = \mathcal{H} (u_0) $ later, 
it is just a general nonzero constant at this moment. 
This reformulation is to make the proof of global well-posedness quite simple; 
as a side effect, Proposition~\ref{prop_ic} (conservation of $ \mathcal{F} $) fails to work anymore, but it can be recovered later (see, Lemma~\ref{lem_sg_int_cl} below). 

It is important to note that, our purpose is the well-posedness of the original sG equation~\eqref{eq_sg}, 
but we consider the initial value problem~\eqref{eq_sg_int} independently for a while, 
and the relation between them will be established after that. 

The following lemma states the global well-posedness of the integral form~\eqref{eq_sg_int} (see Appendix for the proof). 

\begin{lemma}\label{lem_wp_sg_int}
	Let $ u_0 \in H^1 (\Sb) $ be an initial data. 
	Then, there exists a unique global solution $  u \in C^1 ([0,\infty); H^1 (\Sb)) $ of~\eqref{eq_sg_int}. 
\end{lemma}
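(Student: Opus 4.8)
The plan is to recast the integral form as an abstract ordinary differential equation $u_t = G(u)$ in the Banach space $H^1(\Sb)$, where
\[ G(v) := \adps \sin v - \frac{1}{\mathcal{H}_0} \tilde{\mathcal{C}}(v)\, \mathbf{1}, \qquad \tilde{\mathcal{C}}(v) = \langle \cos v, \adps \sin v \rangle, \]
and then to prove local well-posedness by the Picard--Lindel\"of (Cauchy--Lipschitz) theorem in $H^1(\Sb)$, followed by global continuation via an a priori bound.

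First I would show that $G : H^1(\Sb) \to H^1(\Sb)$ is well-defined and locally Lipschitz. The maximal Tseng generalized inverse $\adps$ gains one derivative, so from its Fourier-series definition one checks $\|\adps w\|_{H^1} \le \sqrt{2}\,\|w\|_{L^2}$; in particular $\adps : X^0 \to X^1$ is bounded. The composition (Nemytskii) operators $v \mapsto \sin v$ and $v \mapsto \cos v$ map $H^1(\Sb)$ into itself and are locally Lipschitz: using the embedding $H^1(\Sb) \hookrightarrow L^\infty(\Sb)$, the $1$-Lipschitz continuity of $\sin$ and $\cos$, and the identity $(\sin v)_x = (\cos v)\, v_x$, one estimates $\|\sin v - \sin w\|_{H^1} \le C(1+R)\|v - w\|_{H^1}$ whenever $\|v\|_{H^1}, \|w\|_{H^1} \le R$. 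Since $\tilde{\mathcal{C}}$ is assembled from these maps and the continuous bilinear $L^2$ pairing, $v \mapsto \tilde{\mathcal{C}}(v)$ is locally Lipschitz from $H^1(\Sb)$ to $\RR$, and hence so is $G$. Picard--Lindel\"of then yields a unique maximal solution $u \in C^1([0,T_{\max}); H^1(\Sb))$ together with the blow-up alternative: either $T_{\max} = \infty$, or $\|u(t)\|_{H^1} \to \infty$ as $t \uparrow T_{\max}$.

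Next I would rule out finite-time blow-up with two a priori estimates. Differentiating the equation in $x$ and using $\partial_x \adps = P_{\check{X}^0}$ together with $\partial_x \mathbf{1} = 0$ gives $u_{tx} = \sin u - \frac{1}{2\pi}\int_{\Sb}\sin u\,\rd x$; the regularity $u_t = \adps \sin u - \mathrm{const}\cdot\mathbf{1} \in H^2$ (again because $\adps$ gains a derivative) legitimizes both the interchange $u_{tx} = u_{xt}$ and the energy computation. Since $\int_{\Sb} u_x\,\rd x = 0$ on the periodic domain and $\int_{\Sb} u_x \sin u\,\rd x = -\int_{\Sb}\partial_x(\cos u)\,\rd x = 0$, we obtain
\[ \frac{\rd}{\rd t}\,\frac{1}{2}\int_{\Sb} u_x^2\,\rd x = \int_{\Sb} u_x u_{tx}\,\rd x = 0, \]
so $\|u_x(t)\|_{L^2}$ is conserved and equals $\|(u_0)_x\|_{L^2}$. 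For the $L^2$ part, the bounds $|\sin u|, |\cos u| \le 1$ give $\|\sin u\|_{L^2}, \|\cos u\|_{L^2} \le \sqrt{2\pi}$, while $\|\adps w\|_{L^2} \le \|w\|_{L^2}$; hence $\|\adps \sin u\|_{L^2} \le \sqrt{2\pi}$ and $|\tilde{\mathcal{C}}(u)| \le 2\pi$, so $\|u_t\|_{L^2} = \|G(u)\|_{L^2}$ is bounded by a constant $C_0$ independent of $u$, whence $\|u(t)\|_{L^2} \le \|u_0\|_{L^2} + C_0 t$.

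Combining the two estimates, $\|u(t)\|_{H^1}$ grows at most linearly in $t$ and therefore stays finite on every bounded interval; by the blow-up alternative this forces $T_{\max} = \infty$, giving the unique global solution. The main technical obstacle is the local Lipschitz estimate for the Nemytskii operators $\sin$ and $\cos$ on $H^1(\Sb)$, which is where the algebra and $L^\infty$-embedding structure of $H^1$ in one dimension is used. By contrast, the global step is unexpectedly painless: the uniform boundedness of $\sin$ and $\cos$ yields a uniform bound on $\|u_t\|_{L^2}$, and the conservation of $\|u_x\|_{L^2}$ comes essentially for free from the periodic structure, so no delicate Gr\"onwall argument is needed.
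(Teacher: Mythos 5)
Your proof is correct, but it takes a genuinely different route from the paper's. The paper proves that $F(v) = \adps \sin v - (\tilde{\mathcal{C}}(v)/\mathcal{H}_0)\mathbf{1}$ is \emph{globally} Lipschitz from $H^1(\Sb)$ to $H^1(\Sb)$, and then global existence and uniqueness follow in one stroke from the standard theory for globally Lipschitz vector fields, with no continuation argument and no a priori estimates. The key observation that makes this work — and that your argument does not exploit — is that one never needs to differentiate $\sin v$: writing $\|\adps(\sin v - \sin w)\|_1^2 = \|\adps(\sin v - \sin w)\|_0^2 + \|\partial_x\adps(\sin v - \sin w)\|_0^2$ and using $\partial_x\adps = P_{\check{X}^0}$, both terms are controlled by $\|\sin v - \sin w\|_0 \le \|v-w\|_0$, so the pointwise $1$-Lipschitzness of $\sin$ in $L^2$ suffices and the constant is uniform. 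By contrast, you estimate $(\sin v)_x = (\cos v)v_x$ directly, which forces the $H^1 \hookrightarrow L^\infty$ embedding, yields only a local Lipschitz constant depending on $R$, and therefore obliges you to invoke the blow-up alternative and close the argument with two a priori bounds (conservation of $\|u_x\|_{L^2}$ and the uniform bound on $\|u_t\|_{L^2}$). Both of those bounds are correct and the continuation argument goes through, so your proof is complete; it is simply longer. What your route buys is robustness: it is exactly the local-Lipschitz-plus-continuation scheme that survives in $H^s(\Sb)$ for $s \ge 2$, where, as the paper remarks after Theorem~\ref{thm_wp_sg}, global Lipschitzness of $F$ fails; and the conservation of $\|u_x\|_{L^2}$ you establish along the way is a genuine structural fact about~\eqref{eq_sg_int} that the paper's proof never needs to notice.
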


%
%\begin{remark}\label{rem_mtg}
%	As shown in the proof above, 
%	the maximal Tseng generalized inverse $ \adps $ is extremely useful for such an analysis by the following reasons. 
%	First, if we employ the other Tseng generalized inverse $ \pid $, the domain of the  map $ \tilde{F} (v) = \pid \sin v - ( \tilde{\mathcal{C}} / \mathcal{H}_0 ) \mathbf{1} $ is $ \{ v \in H^1(\Sb) \mid \mathcal{F} (v) = 0 \} $ so that the analysis will be complicated. 
%	Second, because the operator norms of $ \adps $, $ \partial_x \adps $ and $ \adps \partial_x$ are one, 
%	the estimation of each term is usually quite easy (cf. \cite[Lemma~2.6]{LY2017_2} for the evaluation of another similar operator).
%\end{remark}

The lemma above reveals the existence of the global solution $u$ of the initial value problem~\eqref{eq_sg_int}. 
In order to elevate it to Theorem~\ref{thm_wp_sg}, we need one more step to overcome the side effect mentioned above. 
The following conservation law will be necessary. 
%Though one may feel that we are ready to prove Theorem~\ref{thm_wp_sg}, 
%a conservation law is actually indispensable in order to ensure that 
%the solution $u$ of~\eqref{eq_sg_int} also solves~\eqref{eq_sg}, 
%that is, $ \partial_x \adps \sin u = \sin u $. 
%The following lemma shows the indispensable conservation law of~\eqref{eq_sg_int}.

 \begin{lemma}\label{lem_sg_int_cl}
 	If $u_0 \in H^1 (\Sb) $ satisfies $ \mathcal{F} (u_0) = 0 $ and $ \mathcal{H} (u_0) = \mathcal{H}_0 $, 
 	the global unique solution $u$ in Lemma~\ref{lem_wp_sg_int} satisfies 
 	$ \mathcal{F} (u(t)) = 0 $ and $ \mathcal{H} (u(t)) = \mathcal{H}_0 $. 
 \end{lemma}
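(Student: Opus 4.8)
The plan is to establish both conservation laws simultaneously by differentiating the two scalar functions $\mathcal{F}(u(t))$ and $\mathcal{H}(u(t))$ along the global solution $u$ supplied by Lemma~\ref{lem_wp_sg_int}, and then observing that the pair $\bigl(\mathcal{F}(u(t)),\,\mathcal{H}(u(t))-\mathcal{H}_0\bigr)$ obeys a homogeneous coupled linear ODE with vanishing initial data. Since $u \in C^1([0,\infty);H^1(\Sb))$ and $\sin,\cos$ are smooth with bounded derivatives, both functionals are continuously differentiable in $t$; by \cref{def_vd}, using $\delta\mathcal{F}/\delta u = \cos u$ and $\delta\mathcal{H}/\delta u = -\sin u$, their derivatives are $\frac{\rd}{\rd t}\mathcal{F}(u(t)) = \langle \cos u, u_t\rangle$ and $\frac{\rd}{\rd t}\mathcal{H}(u(t)) = \langle -\sin u, u_t\rangle$.

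Next I would substitute the evolution equation $u_t = \adps \sin u - \frac{1}{\mathcal{H}_0}\tilde{\mathcal{C}}(u)\mathbf{1}$ and simplify using four elementary identities: $\langle \cos u, \mathbf{1}\rangle = \mathcal{H}(u)$ and $\langle \sin u, \mathbf{1}\rangle = \mathcal{F}(u)$ by definition of these functionals; $\langle \cos u, \adps \sin u\rangle = \tilde{\mathcal{C}}(u)$, which is literally the definition of $\tilde{\mathcal{C}}$; and, crucially, $\langle \sin u, \adps \sin u\rangle = 0$. The last identity holds because $\partial_x$ is skew-adjoint on the periodic domain, so its maximal Tseng generalized inverse $\adps$ is skew-adjoint as well (equivalently, its Fourier multiplier $1/(k\im)$ is odd in $k$, so the $\pm k$ contributions to $\langle v, \adps v\rangle$ cancel); hence $\langle v, \adps v\rangle = 0$ for every real $v$ in its domain. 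Substituting then yields
\[
\frac{\rd}{\rd t}\mathcal{H}(u(t)) = \frac{1}{\mathcal{H}_0}\tilde{\mathcal{C}}(u)\,\mathcal{F}(u), \qquad
\frac{\rd}{\rd t}\mathcal{F}(u(t)) = \tilde{\mathcal{C}}(u)\Bigl(1 - \frac{\mathcal{H}(u)}{\mathcal{H}_0}\Bigr).
\]

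Finally I would close the argument by a Lyapunov-type computation. Writing $a(t) := \mathcal{F}(u(t))$ and $b(t) := \mathcal{H}(u(t)) - \mathcal{H}_0$, the two formulas become $\dot a = -\frac{\tilde{\mathcal{C}}(u)}{\mathcal{H}_0}\,b$ and $\dot b = \frac{\tilde{\mathcal{C}}(u)}{\mathcal{H}_0}\,a$, so that $\frac{\rd}{\rd t}(a^2 + b^2) = 0$. The hypotheses $\mathcal{F}(u_0)=0$ and $\mathcal{H}(u_0)=\mathcal{H}_0$ give $a(0)=b(0)=0$, whence $a(t)^2 + b(t)^2 \equiv 0$, i.e.\ $\mathcal{F}(u(t))=0$ and $\mathcal{H}(u(t))=\mathcal{H}_0$ for all $t$.

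The one genuinely structural step is the skew-symmetry identity $\langle \sin u, \adps \sin u\rangle = 0$; everything else reduces to bookkeeping once differentiation of the functionals along the $C^1$-in-time orbit is justified (which follows from the boundedness of the derivatives of $\sin$ and $\cos$ together with the continuity of $t \mapsto u_t$ in $H^1(\Sb)$). I expect the only subtlety worth flagging is that $\mathcal{F}$ and $\mathcal{H}$ are \emph{not} conserved separately along~\eqref{eq_sg_int} — their derivatives vanish only jointly — so a direct attempt to conserve $\mathcal{F}$ alone fails; the conserved combination $a^2+b^2$ is precisely the device that couples the two and forces both to remain at their initial values.
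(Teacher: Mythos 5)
Your proof is correct and follows essentially the same route as the paper: both compute $\frac{\rd}{\rd t}\mathcal{F}(u(t))=\frac{\tilde{\mathcal{C}}(u)}{\mathcal{H}_0}(\mathcal{H}_0-\mathcal{H}(u))$ and $\frac{\rd}{\rd t}\mathcal{H}(u(t))=\frac{\tilde{\mathcal{C}}(u)}{\mathcal{H}_0}\mathcal{F}(u)$, using the skew-symmetry identity $\langle \sin u,\adps\sin u\rangle=0$ to kill the quadratic term. The only difference is that the paper leaves the final step (``the assumptions imply'') implicit, whereas you make it explicit via the conserved quantity $a^2+b^2$ for the coupled homogeneous linear system --- a welcome clarification, not a different argument.
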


\begin{proof}
	By simple calculation, we see
	\begin{align*}
	\frac{\rd}{\rd t} \mathcal{F}(u(t))
	&= \left\langle \cos u (t) , u_t (t) \right\rangle 
	%= \tilde{\mathcal{C}} (u(t)) - \frac{\tilde{\mathcal{C}} (u(t)) }{\mathcal{H}_0} \mathcal{H} (u(t)) \\
	= \frac{\tilde{\mathcal{C}}(u(t))}{\mathcal{H}_0} \left( \mathcal{H}_0 - \mathcal{H} (u(t))  \right), \\
	\frac{\rd}{\rd t} \mathcal{H} (u(t))
	&= \left\langle - \sin u (t) ,  u_t (t) \right\rangle = - \left\langle \sin u(t) , \adps \sin u(t) \right\rangle + \frac{\tilde{\mathcal{C}}(u(t))}{\mathcal{H}_0} \mathcal{F} (u(t)) \\
	&= \frac{\tilde{\mathcal{C}}(u(t))}{\mathcal{H}_0} \mathcal{F} (u(t)). 
	\end{align*}
	Therefore, the assumptions imply
	$ \mathcal{F} (u (t) ) = 0 $ and $ \mathcal{H} (u(t)) = \mathcal{H}_0 $. 
\end{proof}

Now, we are ready to prove Theorem~\ref{thm_wp_sg}. 
When assumptions of Lemma~\ref{lem_sg_int_cl} are satisfied, the solution $u $ of \eqref{eq_sg_int} satisfies 
$ \mathcal{F} (u(t)) = 0$ for any $ t > 0 $, i.e., $ \sin u (t) \in \check{H}^1 (\Sb) $. 
It implies $ \partial_x \adps \sin u (t) = \sin u $ so that $ u $ is also a solution of~\eqref{eq_sg}. 
Therefore, there exists a global solution for the initial value problem~\eqref{eq_sg}. 
The uniqueness is obvious due to the uniqueness of the integral form (Lemma~\ref{lem_wp_sg_int}) 
because all solutions of \eqref{eq_sg} also solve \eqref{eq_sg_int} with appropriate $ \mathcal{H}_0 $. 

\begin{remark}
	Note that, the argument above ceases to work when we deal with $ H^s (\Sb) $ for $ s \ge 2 $, 
	because the map $F : H^s (\Sb) \to H^s (\Sb) $ is not globally Lipschitz in such spaces due to the presence of the sine function (see, e.g., appendix of \cite{BM2008} proving the global well-posedness of the sine-Gordon equation in Euclidean coordinates). 
	Still, by using local Lipschitzness, we can prove the local well-posedness. 
	Since our motivation is not the well-posedness of the sine-Gordon equation itself, 
	we do not step into this issue in the present paper. 
\end{remark}

\subsection{Concluding remarks of Section~\ref{sec_der}}

As shown in the proof of Theorem~\ref{thm_wp_sg}, 
the equivalence of differential and integral forms strongly relies on the common implicit constraint $ \mathcal{F} (u(t) ) = 0 $. 
However, the origins of the constraints are significantly different. 
In the differential form, the implicit constraint is automatically realized by its structure, namely, the property $ \mathbf{1} \in \Range (\partial_x)^{\perp} $ of the spatial differential operator $ \partial_x $. 
On the other hand, in the integral form, 
the implicit constraint is kept as a nontrivial conserved quantity (see, e.g., Proposition~\ref{prop_ic} and Lemma~\ref{lem_sg_int_cl}). 
This difference has a critical impact over discretization, 
which is discussed below. 
Briefly speaking, the former is naturally inherited, 
while the latter is generally lost unless some explicit care is taken such that it is kept.

\section{Discretizations}
\label{sec_class}

Based on the above observation, we consider the finite difference spatial discretization of the initial value problem~\eqref{eq_ivp}. 
For this purpose, we introduce $ u_k : [0,T) \to \RR \ ( k \in \mathbb{Z} )$ as the approximation of $ u ( t , k \Delta x ) $, 
where the discrete periodic boundary conditions $ u_{k+K} = u_{k} $ are imposed, and the spatial mesh size $ \Delta x $ is defined as $ \Delta x = 2 \pi /K $ for some positive integer $K$. 
Since we assume the discrete periodicity, we employ the notation $ u = (u_1,\dots , u_K)^{\top} $. 
Although this is an abuse of symbol, we use this since generally no confusion occurs between this and the continuous solution $ u(t,x) $. 

In \cref{subsec_dis_integral,subsec_dis_diff}, we compare the discretization of the differential and integral forms. 
Then, we classify the existing schemes and derive their new variants in Section~\ref{subsec_class}. 
%Section~\ref{subsec_temp} is devoted to notes on temporal discretization. 
\Cref{sec_mderivative} is devoted to discuss which discretization is suitable for the mixed derivative. 

Here, we point out an extremely important fact that, 
despite the equivalence in the continuous case, 
discretizations based on the differential and integral forms can be essentially different.

\subsection{Discretization of the integral form}
\label{subsec_dis_integral}

Since numerical methods for linear case~\eqref{eq_linear} had been mainly considered based on the integral form in the literature,  
let us first start with the integral form. 
Although the implicit constraint is kept thanks to its simplicity, 
when it comes to general case~\eqref{eq_nl}, 
the implicit constraint is violated in general as illustrated below. 

Let us for brevity introduce the map $ F : X^{s+k} \to X^s $ such that $ F(u) := \pid f (u) $, 
and consider the discretization 
\begin{equation}\label{eq_dis_integral}
\dot{u}_k + \bar{g}_k (u) = \bar{F}_k (u) + \bar{\mathcal{C}} (u)
\end{equation}
of the integral form~\eqref{eq_nl_int}, where 
$ \bar{g}_k : \RR^K \to \RR $ and $ \bar{F}_k $ are some approximations of $g$ and  $F$, 
and $ \dot{u}_k $ stands for the time derivative of $u_k$.   

In this case, the equation~\eqref{eq_dis_integral} is an ODE, 
and generally no constraint is explicitly accompanied here. 
Thus, unless some special care is taken in the discretization so that 
a discrete counterpart of the implicit constraint $ \mathcal{F} (u(t)) = 0 $ successfully results, 
the solution generally violates the implicit constraint (recall the discussion in the last of the previous section). 
This is in sharp contrast to the continuous case. 

Still, for linear case~\eqref{eq_linear}, i.e., the implicit constraint is linear, 
even when we are based on the integral form, 
we can easily construct a numerical method satisfying a discrete analogue of the linear implicit constraint 
(see, the examples in the following section). 

Another note should go to the fact that, 
again as opposed to the continuous case, 
\eqref{eq_dis_integral} cannot be generally reduced to a differential form in the following sense. 
One may expect that we can obtain a differential form such as
\[ \delta_x \dot{u}_k + \delta_x \bar{g}_k = \delta_x \bar{F}_k (u) \]
by some difference operator $ \delta_x $ 
such as the forward difference $ \fd_x u_k := ( u_{k+1} - u_{k} )/\Delta x $, 
central difference $ \cd_x u_k := ( u_{k+1} - u_{k-1} ) / ( 2 \Delta x ) $, among others.
Unfortunately, however, unless the term $ \delta_x \bar{F}_k (u) $ can be simplified so that no singular operators appear there, 
this implicit DAE is obviously underdetermined.

\subsection{Discretization of the differential form}
\label{subsec_dis_diff}

Next, let us consider the direct discretization of the differential form 
(Miyatake--Yaguchi--Matsuo~\cite{MYM2012} firstly and only introduced the discretization of the differential form). 
We show such a discretization keeps a discrete analogue of the implicit constraint 
so that it can be transformed into another expression, which can be regarded as a discretization of the integral form. 

For simplicity, we consider the discretization in the form
\begin{equation}
\delta_x \left( \dot{u}_k +  \bar{g}_k \left( u \right) \right) = \bar{f}_k \left( u \right), 
\label{eq_direct}
\end{equation}
where $ \bar{f}_k :\RR^K \to \RR $ is some approximation of $f$. 
It should be noted that, the equation~\eqref{eq_direct} is a DAE due to the singularity of $ \delta_x $ (recall Remark~\ref{rem_temp}). 

Here, we introduce the matrix-vector expression 
\begin{equation}
D \left( \dot{u} + \bar{g} (u) \right) = \bar{f} (u),
\end{equation}
where $ D $ is the matrix representation of $ \delta_x $, and $ \bar{g} $ and $ \bar{f} $ are defined as $ \bar{g} (u) := ( \bar{g}_1 (u) ,\dots,\bar{g}_K (u))^{\top} $ and $ \bar{f} (u) := ( \bar{f}_1 (u) ,\dots,\bar{f}_K (u))^{\top} $. 
We assume $ D$ is circulant and $ \mathbf{1}^{\top} D = 0 $, 
where $ \mathbf{1} := (1,\dots,1)^{\top} $. 
These are quite mild assumptions since we impose the discrete periodic boundary condition 
and employ the uniform grid. 
Moreover, we assume $ \rank D = K -1 $ for simplicity. 
 
Then, by multiplying $ \mathbf{1}^{\top} $, we see that  
the solution $u$ of the equation~\eqref{eq_direct} automatically satisfies the implicit constraint 
\begin{equation}\label{eq_dic}
\mathcal{F}_{\rd} (u) := \sum_{k=1}^K \bar{f}_k (u) \Delta x = 0 \qquad (\forall t \in [0,T)).
\end{equation}
Note that, this is a discrete counterpart of the implicit constraint $ \mathcal{F} (u(t)) = 0 $, 
which is a distinct advantage of the differential form. 

Furthermore, discretized differential form can be safely transformed to an integral form, 
which is another advantage. 
To see this, 
let us follow the line of the discussion in Section~\ref{subsec_der_nl}. 
By introducing the Tseng generalized inverse $ \ginv{\delta}_x $ of a difference operator $ \delta_x $, 
the scheme~\eqref{eq_direct} can be transformed into 
\begin{equation}
\dot{u}_k + \bar{g}_k (u) = \ginv{\delta}_x \bar{f}_k \left( u \right) + c(t), 
\label{eq_direct_inv}
\end{equation}
where $c(t)$ does not depend on $k$. 
It should be noted that, the matrix expression of $ \ginv{\delta}_x $ is a generalized inverse matrix of $D$.  
In a way similar to the case of the original PDE (Section~\ref{subsec_der_nl}), 
the implicit constraint enables us to determine $c (t)$ under an assumption as follows. 

Since the value of $\mathcal{F}_{\rd} (u)$ is always $0$ and the solution $u$ of the equation~\eqref{eq_direct_inv} satisfies
\begin{equation}
\frac{\rd}{\rd t} \mathcal{F}_{\rd} (u) = \nabla \mathcal{F}_{\rd} (u) \cdot \dot{u} = \nabla \mathcal{F}_{\rd} (u) \cdot \left( - \bar{g} (u) + \ginv{D} \bar{f} (u) + c(t) \mathbf{1}\right)
\end{equation}
(`$\cdot$' denotes the standard inner product),
the value of $c(t) $ is determined as 
\begin{equation}
c(t) = \mathcal{C}_{\rd} (u) :=  \frac{ \nabla \mathcal{F}_{\rd} (u) \cdot \left( \bar{g} (u) - \ginv{D} \bar{f} (u) \right) }{\nabla \mathcal{F}_{\rd} (u) \cdot \mathbf{1}}
\end{equation} 
for $ t \in (0,T)$ satisfying $ \nabla \mathcal{F}_{\rd} (u) \cdot \mathbf{1} \neq 0 $. 
Thus, in this case, the equation~\eqref{eq_direct} is an implicit DAE with index one 
(when $ \nabla \mathcal{F}_{\rd} (u) \cdot \mathbf{1} = 0 $, we obtain a new constraint and index is more than one). 
Under the assumption $ \nabla \mathcal{F}_{\rd} (u) \cdot \mathbf{1} \neq 0  \ ( t \in (0,T))$ 
(which obviously corresponds to the condition $ \int_{\Sb} \delta \mathcal{F} / \delta u \, \rd x \neq 0 $), 
the equation~\eqref{eq_direct} is equivalent to
\begin{equation}
\dot{u}_k + \bar{g}_k (u) = \ginv{\delta}_x \bar{f}_k (u) + \mathcal{C}_{\rd} (u), 
\label{eq_direct_integral}
\end{equation}
which can be regarded as a discretization of the integral form~\eqref{eq_nl_int}. 

Note that, 
although so far we have considered the simple discretization~\eqref{eq_direct}, 
our strategy can easily be applied to other cases. 
For example, the Ostrovsky equation~\cite{O1978} can be rewritten as
\begin{equation}\label{eq_Ostrovsky}
 u_{tx} + u_x^2 + u u_{xx} + \beta u_{xxxx} = \gamma u,  
 \end{equation}
whose discretization is not necessarily in the form~\eqref{eq_direct_inv}. 
For example, 
one sometimes should employ the spatial discretization which cannot be written in the form~\eqref{eq_direct} in order to maintain the conservation law (see, e.g.,~\eqref{eq_Os_norm_fd}). 
Thus, in general, we can consider 
\begin{equation}\label{eq_direct_general}
\delta_x \dot{u}_k + \left( \overline{\partial_x g} \right)_k (u) = \bar{f}_k (u), 
\end{equation}
where $ (\overline{\partial_x g})_k $ is an approximation of $ \partial_x g $ satisfying $ \sum_{k=1}^K (\overline{\partial_x g})_k (u) = 0 $. 
Even when we deal with such a case, we can similarly derive the corresponding integral form
%\begin{align*}
%\dot{u}_k + \ginv{\delta}_x \left( \overline{\partial_x g} \right)_k (u) &= \ginv{\delta}_x \bar{f}_k (u) + \mathcal{C}_{\rd} (u),& 
%\mathcal{C}_{\rd} (u) &= \frac{ \nabla \mathcal{F}_{\rd} (u) \cdot \left( \ginv{D} \left( \overline{\partial_x g} \right) (u) - \ginv{D} \bar{f} (u) \right) }{\nabla \mathcal{F}_{\rd} (u) \cdot \mathbf{1}}. 
%\end{align*}
\begin{align*}
\dot{u}_k + \ginv{\delta}_x \left( \overline{\partial_x g} \right)_k (u) &= \ginv{\delta}_x \bar{f}_k (u) + \frac{ \nabla \mathcal{F}_{\rd} (u) \cdot \left( \ginv{D} \left( \overline{\partial_x g} \right) (u) - \ginv{D} \bar{f} (u) \right) }{\nabla \mathcal{F}_{\rd} (u) \cdot \mathbf{1}}. 
\end{align*}
Although the discretization~\eqref{eq_direct_general} is more general than the simple case~\eqref{eq_direct} and includes some practical numerical methods we show below, 
there is no significant difference between \eqref{eq_direct} and \eqref{eq_direct_general} in view of the transformation into the integral form. 
Thus, for simplicity, we employ the simple case in Theorem~\ref{thm_avtrap}, 
and refer the simple case hereafter.

\subsection{Review of Existing schemes}
\label{subsec_class}

In this section, we classify the existing methods from the viewpoint of Section~\ref{sec_der}, 
and derive their equivalent expressions in another form when possible. 
Although the full-discretizations are defined in the literature, 
we show the corresponding semi discretizations by taking the limit $ \Delta t \to 0 $. 

\begin{table}[htp]
\renewcommand{\arraystretch}{1.2}
\caption{The classification of the existing methods and their equivalent schemes in another form. Schemes in italic are those newly derived in this paper. }
\label{tab_cls}
\centering
\begin{tabular}{c||c|c}
\hline \hline
PDE & Differential form~\eqref{eq_nl} & Integral form~\eqref{eq_nl_int} \\
 & ${\displaystyle u_{tx} + g_x(u) = f(u) }$ & ${\displaystyle u_{t} + g(u) = F(u) + \mathcal{C} (u) } $\\ \hline
Scheme & ${\displaystyle \delta_x \dot{u}_k + \delta_x \bar{g}_k (u) = \bar{f}_k (u)  }$ & $ {\displaystyle \dot{u}_k + \bar{g}_k (u) = \bar{F}_k (u) + \bar{\mathcal{C}} (u) }$ \\ 
 & implicit DAE & ODE \\ \hline \hline
Ostrovsky & \emph{average-difference~\eqref{eq_Os_norm_fd_diff}} & trapezoidal~\eqref{eq_Os_norm_fd} \\
                & \emph{Fourier-spectral~\eqref{eq_Os_norm_ps_diff}} & Fourier-spectral~\eqref{eq_Os_norm_ps} \\ \hline
%pOstrovsky & central difference~\eqref{eq_Os_norm_pot} & \emph{generalized inverse~\eqref{eq_Os_norm_pot_integral}} \\[2pt] \hline
SG            & average-difference~\eqref{eq_avdiff} & \emph{trapezoidal~\eqref{eq_SG_trap}} \\[2pt] \hline \hline
\end{tabular}
\end{table}

Yaguchi--Matsuo--Sugihara~\cite{YMS2010} introduced the discrete counterpart 
\begin{equation}
\acd u_k = \left( \frac{u_0}{2} + \sum_{i=1}^{k-1} u_i + \frac{u_k}{2} \right) \Delta x - \frac{1}{2 \pi} \sum_{i=1}^K \left( \frac{u_0}{2} + \sum_{j=1}^{i-1} u_j + \frac{u_i}{2} \right) \left( \Delta x \right)^2
\label{eq_inv_fd}
\end{equation}
of $ \check{\partial}_x^{-1} $, and devised the norm-preserving scheme 
\begin{equation}
\dot{u}_k - \frac{1}{3} \left( \cd_x u_k^2 + u_k \cd_x u_k \right) + \beta \cd[3]_x u_k = \gamma \acd u_k 
\label{eq_Os_norm_fd}
\end{equation}
for the Ostrovsky equation~\eqref{eq_Ostrovsky}. 
Note that, $ \acd $ corresponds to the discretization of~\eqref{eq_anti_linear} by the trapezoidal rule.  
Moreover, they devised another norm-preserving scheme
\begin{equation}
\dot{u}_k - \frac{1}{3} \left( \dps u_k^2 + u_k \dps u_k \right) + \beta \dps^3 u_k = \gamma \dps^{\dagger} u_k 
\label{eq_Os_norm_ps}
\end{equation}
by using the Fourier-spectral difference operator $ \dps $ (see, \cite{Fornberg1996} for definition) and its Moore--Penrose pseudoinverse $ \dps^{\dagger} $. 
Note that, as the notation implies, in the finite-dimensional case, 
the maximal Tseng generalized inverse coincides with the Moore--Penrose pseudoinverse (see, \cite[Chapter 9, Theorem~3]{IsraelGreville2003}). 

These schemes are the discretization of the integral form~\eqref{eq_nl_int}, 
which generally cannot be rewritten in the differential form due to the lack of the implicit constraint. 
However, the numerical solutions of them satisfies the constraint $ \sum_{k=1}^K u_k = 0$, 
which is a discrete counterpart of the implicit constraint of the Ostrovsky equation. 
This happens since the (original) implicit constraint is linear. 
Moreover, the discrete counterparts $ \acd $ and $ \dps^{\dagger} $ of $ \ginv{\partial}_x $
can be regarded as a generalized inverse of some difference operators. 
Thus, they have corresponding expression in the differential form~\eqref{eq_ivp}: 
in fact, the trapezoidal scheme~\eqref{eq_inv_fd} can be equivalently rewritten as 
\begin{equation}
\fd_x \left( \dot{u}_k - \frac{1}{3} \left( \cd_x u_k^2 + u_k \cd_x u_k \right) + \beta \cd[3]_x u_k \right) = \gamma \fa_x u_k , 
\label{eq_Os_norm_fd_diff}
\end{equation}
where the forward average operator $ \fa_x $ is defined as $ \fa_x u_k = ( u_{k} + u_{k+1} )/2 $, 
and the Fourier-spectral scheme~\eqref{eq_Os_norm_ps} can be equivalently rewritten as 
\begin{equation}
\dps \left( \dot{u}_k - \frac{1}{3} \left( \dps u_k^2 + u_k \dps u_k \right) + \beta \dps^3 u_k \right) = \gamma u_k. 
\label{eq_Os_norm_ps_diff}
\end{equation}

%Miyatake--Yaguchi--Matsuo~\cite{MYM2012}  devised the norm-preserving scheme 
%\begin{equation}
%\cd_x \dot{\phi}_k - \frac{1}{3} \left( \left( \cd_x \phi_k \right) \cd_x + \cd_x \left( \cd_x \phi_k \right) \right) \left( \cd_x \phi_k \right) + \beta \cd[4]_x \phi_k = \gamma \phi_k 
%\label{eq_Os_norm_pot}
%\end{equation}
%for the potential Ostrovskey equation~\eqref{eq_pOstrovsky}, which is in differential form. 
%Since the assumption $ \nabla \mathcal{F}_{\rd} (u) \cdot \mathbf{1} \neq 0 $ always holds thanks to the linear implicit constraint, 
%it can be transformed into the integral form
%\begin{align}
%&\dot{\phi}_k - \frac{1}{3} \cdi \left( \left( \cd_x \phi_k \right) \cd_x + \cd_x \left( \cd_x \phi_k \right) \right) \left( \cd_x \phi_k \right) + \beta \cdi \cd[4]_x \phi_k \label{eq_Os_norm_pot_integral} \\
%& \qquad = \gamma \cdi \phi_k. \notag
%\end{align}

In the transformation of the schemes~\eqref{eq_Os_norm_ps}, %and~\eqref{eq_Os_norm_pot}, 
the Moore--Penrose pseudoinverse is used as one of the generalized inverses. 
On the other hand, in the transformation of the scheme~\eqref{eq_Os_norm_fd} into \eqref{eq_Os_norm_fd_diff}, 
the summation by the trapezoidal rule is used as one of the generalized inverses of the average-difference $ (\fd_x, \fa_x) $, 
which is recently devised by Furihata--Sato--Matsuo~\cite{FSM2016}.  
It can be generalized as shown in the theorem below. 

\begin{remark}
Strictly speaking, since the average-difference had not been rigorously defined as a linear operator (see, Remark~\ref{rem_avdiff}), 
its generalized inverse can not be defined too at this moment. 
However, as shown in the theorem below, the operator $ \acd $ can be used like as the generalized inverse of the average-difference. 

Moreover, the team including present authors has already obtained some results on this issue (see, Section~\ref{sec_cr}).
In fact, the average-difference can be defined as a linear operator, and 
there actually $ \acd $ can be regarded as its Tseng generalized inverse. 
Due to the restriction of the space and 
since this topic is beyond the scope of this paper, we do not step into such a justification here. 
\label{rem_avdiff_gen} 
\end{remark}

\begin{theorem}
Suppose that the initial condition $ ( u_k (0) \mid k =1,\dots,K) $ satisfies $ \sum_{k=1}^K \bar{f}_k (u(0)) = 0 $. 
Let $u$ be a solution of the initial value problem for the average-difference method 
\begin{equation}
\fd_x \left( \dot{u}_k + \bar{g}_k (u) \right) = \fa_x \bar{f}_k (u) 
\label{eq_avdiff}
\end{equation}
satisfying  $ \sum_{j=1}^K \sum_{k=1}^K \frac{\partial \bar{f}_j}{\partial u_k} (u(t)) \neq 0 \ ( t \in (0,T)) $. 
Then, $u$ is also a solution of 
\begin{equation}
\dot{u}_k + \bar{g}_k (u) = \acd \bar{f}_k (u) + \mathcal{C}_{\rd} (u), 
\label{eq_trap}
\end{equation}
where $\mathcal{C}_{\rd} (u) $ is defined as 
\[ \mathcal{C}_{\rd} (u) :=  \frac{\sum_{j=1}^K \sum_{k=1}^K \frac{\partial \bar{f}_j}{\partial u_k} (u) \left( \bar{g}_k (u) - \acd \bar{f}_k (u) \right)}{\sum_{j=1}^K \sum_{k=1}^K \frac{\partial \bar{f}_j}{\partial u_k} (u) }. \]
Conversely, a solution $u$ of the initial value problem for~\eqref{eq_trap} satisfies~\eqref{eq_avdiff}. 
\label{thm_avtrap}
\end{theorem}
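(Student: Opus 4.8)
The plan is to mimic, at the discrete level, the derivation of the integral form carried out for the continuous equation in \cref{subsub_proc}, the bridge between the two forms being the identity $ \fd_x \acd = \fa_x $ that expresses $ \acd $ as a generalized inverse of the average-difference pair $ (\fd_x, \fa_x) $. First I would establish this key identity by a direct computation from the explicit formula~\eqref{eq_inv_fd}: applying $ \fd_x $ to $ \acd u_k $, the $k$-independent mean-subtraction term is annihilated, while the forward difference of the trapezoidal partial sum telescopes to $ (u_k + u_{k+1})/2 = \fa_x u_k $. The same computation shows that $ \acd u $ is genuinely periodic precisely when $ \sum_{k=1}^K u_k = 0 $, the discrete analogue of the zero-mean requirement for $ \check{\partial}_x^{-1} $ in the continuous setting; hence I must first guarantee that $ \bar{f}(u) $ is discretely zero-mean before $ \acd \bar{f}(u) $ may legitimately be used.

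For the forward direction, let $ u $ solve~\eqref{eq_avdiff}. Summing~\eqref{eq_avdiff} over $ k $ and using $ \mathbf{1}^{\top}\Df = 0 $ (telescoping under periodicity) kills the left-hand side, while $ \sum_{k=1}^K \fa_x \bar{f}_k(u) = \sum_{k=1}^K \bar{f}_k(u) $, so the discrete constraint $ \mathcal{F}_{\rd}(u(t)) = \sum_k \bar{f}_k(u)\,\Delta x = 0 $ holds for every $ t $; the initial-condition hypothesis is exactly the compatibility condition making a solution admissible. Consequently $ \acd\bar{f}(u) $ is well defined, and by the key identity~\eqref{eq_avdiff} rewrites as $ \fd_x\bigl(\dot{u}_k + \bar{g}_k(u) - \acd\bar{f}_k(u)\bigr) = 0 $. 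Since $ \rank \Df = K-1 $ with kernel spanned by $ \mathbf{1} $, this forces $ \dot{u}_k + \bar{g}_k(u) - \acd\bar{f}_k(u) = c(t) $ for some $ c(t) $ independent of $ k $. To pin down $ c(t) $ I would differentiate $ \mathcal{F}_{\rd}(u) = 0 $ in $ t $, substitute $ \dot{u} = -\bar{g}(u) + \acd\bar{f}(u) + c(t)\mathbf{1} $, and solve the resulting scalar linear equation $ 0 = \nabla\mathcal{F}_{\rd}(u)\cdot\dot{u} $ for $ c(t) $; the transversality hypothesis $ \sum_{j,k}\frac{\partial\bar{f}_j}{\partial u_k}(u(t)) \neq 0 $, which equals $ \nabla\mathcal{F}_{\rd}(u)\cdot\mathbf{1} $ up to the nonzero factor $ \Delta x $, is exactly what makes this solvable, yielding $ c(t) = \mathcal{C}_{\rd}(u) $ and hence~\eqref{eq_trap}.

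For the converse, let $ u $ solve~\eqref{eq_trap}. I would first check that $ \mathcal{C}_{\rd} $ was constructed so that the constraint is preserved: computing $ (\rd/\rd t)\mathcal{F}_{\rd}(u) = \nabla\mathcal{F}_{\rd}(u)\cdot\dot{u} $ along~\eqref{eq_trap} and inserting the definition of $ \mathcal{C}_{\rd} $, the numerator cancels against $ \mathcal{C}_{\rd}\,\bigl(\nabla\mathcal{F}_{\rd}(u)\cdot\mathbf{1}\bigr) $, so $ \mathcal{F}_{\rd}(u(t)) = \mathcal{F}_{\rd}(u(0)) = 0 $; this is the discrete counterpart of \cref{prop_ic} and again makes $ \acd\bar{f}(u) $ periodic. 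Then applying $ \fd_x $ to~\eqref{eq_trap} and using $ \fd_x\acd = \fa_x $ together with $ \fd_x\mathbf{1} = 0 $ recovers~\eqref{eq_avdiff} at once.

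The main obstacle I expect is the key identity $ \fd_x\acd = \fa_x $ of the first paragraph: it is the discrete analogue of $ \partial_x\ginv{\partial}_x = P_{\check{X}^{s-1}} $ that underlies the whole reformulation, and establishing it cleanly requires careful bookkeeping of the trapezoidal endpoint weights and the periodic index wrap-around in~\eqref{eq_inv_fd}, including verifying that $ \acd $ really sends discretely zero-mean vectors to periodic ones. Everything else reduces to the linear algebra of $ \Null(\Df) $ and the same scalar elimination of $ c(t) $ already performed in \cref{subsec_dis_diff,subsub_proc}.
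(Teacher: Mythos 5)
Your proposal is correct and follows essentially the same route as the paper's proof: obtain $\dot{u}_k + \bar{g}_k(u) = \acd\bar{f}_k(u) + c(t)$ from~\eqref{eq_avdiff} (the paper does this by telescoping partial sums, you by the equivalent identity $\fd_x\acd = \fa_x$ on zero-mean vectors plus $\Null(\Df) = \operatorname{span}\{\mathbf{1}\}$), then determine $c(t)$ by differentiating the automatically satisfied constraint $\sum_k\bar{f}_k(u)=0$, and prove the converse by noting that $\mathcal{C}_{\rd}$ preserves this constraint so that applying $\fd_x$ and using $\fd_x\acd=\fa_x$ recovers~\eqref{eq_avdiff}. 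No gaps.
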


\begin{proof}

First, we derive the integral form~\eqref{eq_trap} from the differential form~\eqref{eq_avdiff}. 
By summing the both sides of \eqref{eq_avdiff} for $ k =1,\dots, j-1$, we see
\begin{align*}
\frac{\dot{u}_j - \dot{u}_1}{\Delta x} + \frac{ \bar{g}_j (u) - \bar{g}_1 (u) }{\Delta x} &= \frac{\bar{f}_0 (u)}{2} + \sum_{k=1}^{j-1} \bar{f}_k (u) + \frac{\bar{f}_j}{2}, 
\end{align*}
which is equivalent to
\begin{equation*}
\dot{u}_k + \bar{g}_k (u) = \acd \bar{f}_k (u) + c(t),
\end{equation*}
where $ c(t)$ does not depend on $k$. 
Note that, the equation above corresponds to~\eqref{eq_inter}. 
Therefore, analogously, in order to determine the value of $ c(t) $, 
we can use the implicit constraint $ \sum_{k=1}^K \fa_x \bar{f}_k (u) = 0 $,
which is satisfied for any solutions of~\eqref{eq_avdiff}. 
Before following the argument~\eqref{eq_key}, 
notice that, 
thanks to the discrete periodicity, $ \sum_{k=1}^K \fa_x \bar{f}_k (u) = \sum_{k=1}^K \bar{f}_k (u) $ holds, 
which allows us to use the simple constraint $ \sum_{k=1}^K \bar{f}_k (u) = 0 $. 
Then, we see
\begin{align*}
0 &= \frac{\rd }{\rd t} \sum_{k=1}^K \bar{f}_k (u) 
= \sum_{j=1}^K \sum_{k=1}^K \frac{ \partial \bar{f}_k }{\partial u_j } (u) \dot{u}_j 
= \sum_{j=1}^K \sum_{k=1}^K \frac{ \partial \bar{f}_k }{\partial u_j } (u) \left( - \bar{g}_k (u) + \acd \bar{f}_j (u) + c(t) \right)
\end{align*}
Thus, we obtain $ c(t) = \mathcal{C}_{\rd} (u) $ under the assumption of the theorem. 

Now, let us prove the converse. 
Note that, for any zero-mean vector $v$ (i.e., $ \sum_{k=1}^K v_k = 0 $ holds), 
$ \fd_x \acd v_k = \fa_x v_k $ is satisfied (this claim can be verified by simple calculation). 
Therefore, the solution $u$ of~\eqref{eq_trap} also satisfies~\eqref{eq_avdiff} due to $ \sum_{k=1}^K \bar{f}_k (u) =0 $, 
which is kept by the definition of $ \mathcal{C}_{\rd} $. 
\end{proof}

By using the theorem above, we see that 
the average-difference method~\cite{SOMF2016+} 
\begin{equation}
\fd_x \dot{u}_k = \fa_x \sin u_k 
\label{eq_SG_avdiff}
\end{equation}
for the sine-Gordon equation~\eqref{eq_SG} is equivalent to 
\begin{equation}\label{eq_SG_trap}
\dot{u}_k = \acd \sin u_k - \frac{ \sum_{k=1}^K \cos u_k \acd \sin u_k }{\sum_{k=1}^K \cos u_k}, 
\end{equation}
unless $ \sum_{k=1}^K \cos u_k = 0 $. 
This assumption, however, can be replaced by that for the initial condition, because $ \sum_{k=1}^K \cos u_k $ is a conserved quantity of the average-difference method~\eqref{eq_SG_avdiff} (see, \cite[Theorem~1]{FSM2016}). 
As can be seen, the integral form~\eqref{eq_SG_trap} has nonlocal operator $ \acd $ and 
thus in this sense \eqref{eq_SG_trap} is more complicated than \eqref{eq_SG_avdiff}. 
Still, the integral form can be used for mathematical analysis: 
the well-posedness of~\eqref{eq_SG_avdiff} can be proved via its integral form by following the discussion in \cref{subsec_wp_sg} 
(strictly speaking, we should employ $ (\fd_x )^{\dagger} \fa_x $ instead of $ \acd $ for such an analysis; see, Remark~\ref{rem_mtg}). 

\vspace{0.5cm}

Summing up all the observations, 
let us close this section with the summary below.  
In general, 
the discretizations of differential and integral forms have the following features, respectively:  
\begin{itemize}
\item Discretizations of the differential form
\begin{itemize}
\item[$+$] are often free from nonlocal operator; 
\item[$+$] automatically have an implicit constraint corresponding to $ \mathcal{F} (u) = 0 $; 
\item[$-$] are implicit DAEs (but usually has index one);
\item[$+$] can be almost always rewritten in the integral form. 
\end{itemize}
\item Discretizations of the integral form
\begin{itemize}
\item[$-$] must have nonlocal operator;
\item[$-$] can lose the implicit constraint; 
\item[$+$] are merely ODEs; %, but the numerical treatment is not so easy because the concrete form of the constant $ \mathcal{C} (u)$ is complicated in general. 
\item[$-$] cannot be rewritten in the differential form in general. 
\end{itemize}
\end{itemize}

Counting all the above pros and cons, 
we believe that, for actual computation, the discretization of the differential form should be employed. 
For the third points, in particular, 
it should be noted that a reduction to an ODE is known to be unpractical for the large DAE representing an electrical network  involving a large and sparse matrix, 
because the sparsity is destroyed by the reduction (see, e.g.~\cite[Example~9.3]{Ascher-Petzold1998}). 
Since the spatial discretization of the differential form also involves a large and sparse matrix, 
we believe it should be numerically treated as is, namely, without a reduction to an ODE. 

On the other hand, the discretization of the integral form may fit to analyzing the property of the scheme as we will show an example in Section~\ref{sec_mderivative}, 
where we have more intense look at the discretization of the differential form.

\subsection{Discussions on the discretization of the mixed derivative}
\label{sec_mderivative}

In this section, we discuss which difference operators are suitable for the spatial discretization of the mixed derivative. 
Since the method of some mathematical analysis on numerical schemes in the form~\eqref{eq_direct} is 
yet to be investigated as mentioned in Introduction, 
we prefer to be based on another expression~\eqref{eq_direct_integral}, 
which is just an ODE. 
Here, we investigate 
the accuracy of generalized inverse $ \ginv{\delta}_x $ of each difference operator $ \delta_x $. 
This is sufficient because the emergence of $ \ginv{\delta}_x $ is the only distinct property of~\eqref{eq_direct_integral}.  
As a result, we conclude that the average-difference is the best way to discretize the mixed derivative 
among 2nd order differences. 
This consequence agrees very well with the numerical observation for some specific cases~\cite{SOMF2016+,FSM2016}. 
 
Since the difference operator $ \delta_x $ is an approximation of the differential operator $ \partial_x $, 
one may expect that $ \ginv{\delta}_x $ is also an approximation of $ \ginv{\partial}_x $. 
Notice also that it is enough to just consider $ \check{\partial}_x^{-1} $ as $ \ginv{\partial}_x $, 
since $ \ginv{\partial}_x $ is only applied to zero-mean functions in~\eqref{eq_nl_int} 
and $ \ginv{\partial}_x v = \pinv{\partial}_x v $ holds for such functions (recall the note after Definition~\ref{def_mtgi}). 
Thus, we expect the relation
\begin{equation}\label{eq_approx}
\ginv{\delta}_x v_k - \ginv{\delta}_x v_{k-1} \approx \int^{k \Delta x}_{(k-1) \Delta x} v(y) \rd y. 
\end{equation}
for $ v : \Sb \to \RR $ and $ v_k \approx v (k \Delta x) $. 
Here, we assume $ (v_k \mid k = 1,\dots,K ) \in \Range (\delta_x) $, 
since $ \ginv{\delta}_x $ is usually applied to such vectors (see, Section~\ref{subsec_dis_diff}). 
Thanks to $ \Range (\delta_x) \subseteq \Dom (\ginv{\delta}_x ) $, 
this assumption justifies that $ \ginv{\delta}_x$ can be applied to $ v_k $. 
Moreover, again as noted after Definition~\ref{def_mtgi}, 
this assumption implies $ \ginv{\delta}_x v_k = \pinv{\delta}_x v_k $ holds, 
i.e., the values of $ \ginv{\delta}_x v_k $ are the same for any Tseng inverses. 

In order to verify the accuracy of the approximation in \eqref{eq_approx}, 
let us consider $ u^{\omega} (x) = \exp ( \im \omega x) $ ($ \omega \in \mathbb{Z} $), i.e., each frequency component of the Fourier series. 
Then, the exact value $ I_k (\omega) $ of the integration on $[(k-1)\Delta x , k \Delta x]$ can be computed as 
\begin{equation}\label{eq_exact_integral}
I_k (\omega) := \int^{k \Delta x}_{(k-1) \Delta x} \exp ( \im \omega x ) \rd x = \frac{2}{\omega} \exp \left( \im \omega \left( k - \frac{1}{2}\right) \Delta x \right) \sin \frac{\omega \Delta x}{2}. 
\end{equation}

Now, let us consider the approximation $ \bar{I}_k (\omega) := \ginv{\delta}_x u^{\omega}_k - \ginv{\delta}_x u^{\omega}_{k-1} $ of $ I_k $, 
where $ u^{\omega}_k := u^{\omega} (k \Delta x) = \exp ( \im \omega k \Delta x ) $. 
In what follows, we only use the notation $ u^{\omega}_k $ as a single component (i.e., $ u^{\omega}_k $ denotes a scholar, and $ u^{\omega} $ denotes a function itself) in order to avoid the ambiguity of possible confusions between the continuous function and the vectors (in the previous sections). 
Notice that, for any $\omega \in \mathbb{Z}$, the vector $ ( u^{\omega}_k \mid k= 1,\dots,K ) $ is one of the eigenvectors of the matrix representation $D$ of $ \delta_x $, 
because $D$ is assumed to be circulant. 
Namely, $ \delta_x u^{\omega}_k = \lambda_{\omega} u^{\omega}_k $ holds for any $ \omega \in \mathbb{Z}$ and $ k \in \mathbb{Z}$, 
where $ \lambda_{\omega} $ is the corresponding eigenvalue, and $ \lambda_{\omega+K} = \lambda_{\omega} $ holds for any $ \omega \in \mathbb{Z} $. 
Then, 
since $ \ginv{\delta}_x v = \pinv{\delta}_x v$ holds for any $ v \in \Range (\delta_x) $ (recall the discussion immediately after Definition~\ref{def_mtgi}) and the Moore--Penrose pseudoinverse $ \pinv{D} $ of the circulant matrix $D$ 
is also circulant, 
we see
\[ \ginv{\delta}_x u^{\omega}_k = \pinv{\delta}_x u^{\omega}_k = \lambda_{\omega}^{-1} u^{\omega}_k = \lambda_{\omega}^{-1} \exp (\im \omega k \Delta x) \]
for $ \omega \in \{ n \in \mathbb{Z} \mid \lambda_n \neq 0 \} $ 
(recall $ \Dom (\check{\partial}_x^{-1}) = \check{X}^{s-1} = \Range (\partial_x)$, and notice $ ( u_k^{\omega} \mid k=1,\dots,K) \in \Range (\delta_x) \iff \lambda_{\omega} \neq 0 $). 
Therefore, for such $ \omega $, it holds that
\begin{equation}\label{eq_approx_integral}
\bar{I}_k (\omega) := \ginv{\delta}_x u^{\omega}_k - \ginv{\delta}_x u^{\omega}_{k-1} = 2 \im \lambda^{-1}_{\omega} \exp \left( \im \omega \left( k - \frac{1}{2}\right) \Delta x \right) \sin \frac{\omega \Delta x}{2}. 
\end{equation}

By combining~\eqref{eq_exact_integral} and \eqref{eq_approx_integral}, we see $\bar{I}_k (\omega) = \im \lambda_{\omega}^{-1} \omega I_k (\omega) $. 
Furthermore, we can easily compute the relative error $ e (\tilde{\omega} ) $ as follows: 
\begin{equation}\label{eq_re_eig}
e(\tilde{\omega}) := \left| \frac{ \bar{I}_k (\omega) - I_k (\omega) }{ I_k (\omega) } \right| = \left| \im \lambda_{\omega}^{-1} \omega - 1 \right|, 
\end{equation}
where $ \tilde{\omega} = \omega \Delta x $ is the scaled wave number. 
%In other words, the relative error $ e (\tilde{\omega}) $ does not depend on $k$. 

Note that, there are the implicitly defined finite differences such as the average-difference and the compact difference (see, e.g., \cite{L1992}),
 i.e., $ U_x = u $ is discretized as $ \delta_x U_k = \mu_x u_k $ with the pair of a difference operator $\delta_x $ and an average operator $ \mu_x $. 
For example, the average-difference is defined by $ \fd_x U_k = \fa_x u_k $, 
and the compact difference is defined by $ \delta^{a,b,c}_x U_k = \mu^{\alpha,\beta}_x u_k $, where 
\begin{align*}
\delta^{a,b,c}_x U_k &= \frac{2c U_{k+3} + 3b U_{k+2} + 6a U_{k+1} -  6a U_{k-1} - 3b U_{k-2} - 2c U_{k-3}}{12\Delta x},\\
\mu^{\alpha,\beta}_x u_k &= \beta u_{k+2} + \alpha u_{k+1} + u_k + \alpha u_{k-1} + \beta u_{k-2} 
\end{align*}
and $ \alpha , \beta , a, b, c$ are parameters. 
Even in these cases, similar argument can be done by using the eigenvalues of $ \pinv{D} M$ instead of $ \lambda^{-1}_{\omega} $'s, 
where $ D $ and $M$ are the matrix representations of $ \delta_x $ and $ \mu_x$. 

\begin{remark}
The compact difference operators are well-defined thanks to the diagonal dominance of the matrix $M$. 
On the other hand, 
since $M$ is singular if $ K$ is even for the average-difference, 
its definition as a linear operator itself is challenging. 
In this paper, we do not step into this issue as we described in Remark~\ref{rem_avdiff_gen}. 

Therefore, the order of the average-difference cannot be defined in usual sense. 
However, we compare it with 2nd order difference operators since
the average-difference reproduce the exact value up to 2nd order polynomials, 
which is a common feature of 2nd order difference operators (see, e.g., Fornberg~\cite{Fornberg1996}). 
\label{rem_avdiff}
\end{remark}

The relative errors $ e_{\mathrm{CD2}} (\tilde{\omega}) $, $ e_{\mathrm{OD2}} (\tilde{\omega}) $, and $ e_{ \mathrm{AD2} } (\tilde{\omega}) $ of the 2nd order central difference, 2nd order one-sided difference $ ( - u_{k+2} + 4 u_{k+1} - 3 u_{k} )/(2 \Delta x) $, and average-difference can be computed as 
\begin{align*}
e_{\mathrm{CD2}} (\tilde{\omega}) &= \left| \frac{ \tilde{\omega} }{\sin \tilde{\omega} } - 1 \right|,&
e_{\mathrm{OD2}} (\tilde{\omega}) &= \left| \frac{2 \im \tilde{\omega } }{-3+4 \exp (\im \tilde{\omega}) - \exp (2 \im \tilde{\omega})} - 1 \right|\\
e_{\mathrm{AD}} (\tilde{\omega}) &= \left| \frac{\tilde{\omega}}{2 \tan ( \tilde{\omega} / 2 )} - 1 \right| 
\end{align*}
for $ \tilde{\omega} \notin \{ n \pi \mid n \in \mathbb{Z} \}$. 
On the other hand, the relative error $ e_{\mathrm{PS}} (\tilde{\omega}) $ for $ \tilde{\omega} \notin \{ n \pi \mid n \in \mathbb{Z} \}$ of the Fourier-spectral difference is 
\[ e_{\mathrm{PS}} (\tilde{\omega}) = \begin{cases} \left| 2n\pi / ( \tilde{\omega} - 2 n \pi ) \right| & (\tilde{\omega} \in (2n \pi , 2(n+1) \pi ) ) \\ \left| 2 (n+1) \pi / ((2n+1) \pi - \tilde{\omega}) \right| & (\tilde{\omega} \in ( (2n+1) \pi , (2n+2) \pi ) )  \end{cases}. \]
These relative errors are summarized in Figure~\ref{fig_re2}. 

\begin{figure}[htp]
\centering
\begin{tikzpicture}
\begin{axis}[width=9cm,compat = newest,xlabel={Scaled wave number $  \tilde{\omega} = \omega \Delta x$},enlarge x limits=false, 
legend style={at={(1.3,0.7)},anchor=north,legend columns=1},
ylabel ={relative error},ymin=0,ymax=6,xmin=0,xmax=6.284,grid = major,thick]
\addplot[smooth,green] table {data/re255cen2.dat};
\addlegendentry{Central difference}
\addplot[smooth,black] table {data/re255fd2.dat};
\addlegendentry{One-sided difference}
\addplot[smooth,red] table {data/re255ad2.dat};
\addlegendentry{Average-difference}
\addplot[green,densely dotted] table {data/re255ps.dat};
\addlegendentry{Fourier-spectral}
\end{axis}
\end{tikzpicture}
\caption{The relative errors $ e(\tilde{\omega}) $ of 2nd order difference operators and the Frourier-spectral difference operator.}
\label{fig_re2}
\end{figure}
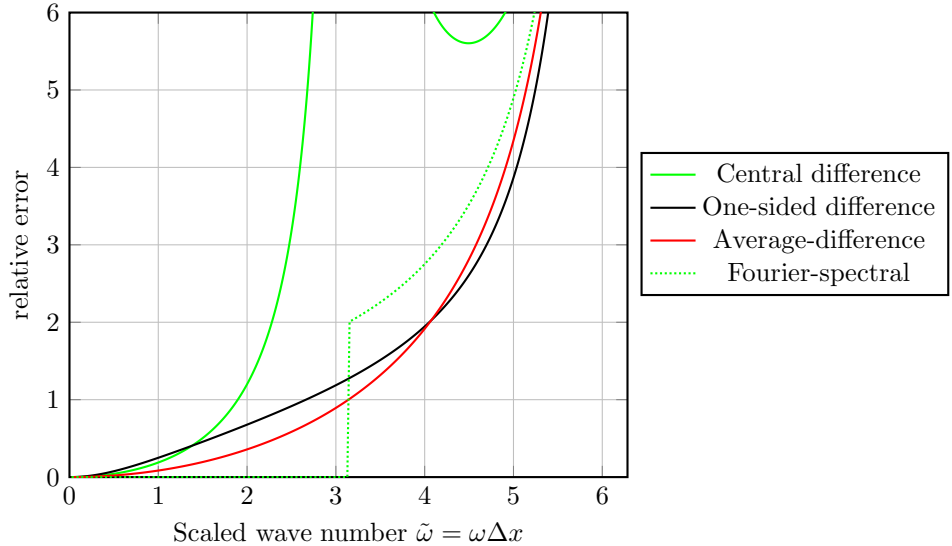

Figure~\ref{fig_re2} shows that 
the average-difference is far better than the central difference, 
in particular, for high frequency components. 
This fact is in good agreement with the observation by Sato--Oguma--Matsuo--Feng~\cite{SOMF2016+} for the sine-Gordon equation (see, \cite[Fig.14 and 16]{SOMF2016+}). 
There, numerical solutions obtained by a finite difference method with the central difference strongly suffer from artificial oscillation, 
while those by the average-difference method reproduce the solution very well. 
Moreover, for frequency components above the Nyquist frequency $ \tilde{\omega} = \pi$, the average-difference is superior to the Fourier-spectral difference. 
This agrees well with the observation by Furihata--Sato--Matsuo~\cite{FSM2016} for the linear Klein--Gordon equation with the square wave (see, \cite[Fig. 3]{FSM2016}). 
There, numerical solutions obtained by the central difference, Fourier-spectral difference, and average-difference are compared, 
and it was found that those by the central and Fourier-spectral differences suffer from artificial oscillation, while again,
the average-difference reproduces better numerical solutions. 

The behavior of the one-sided difference is similar to that of the average-difference method. 
However, the average-difference method is better than the one-sided difference 
for most frequency components. 

As a result, we conclude that the average-difference method is the best among the 2nd order difference methods considered here. 
It should be noted that, 
the target initial value problem in the form~\eqref{eq_ivp} involves 
equations whose solutions tend to have steep fronts (see, e.g., \cite{LPS2009,LPS2010}).  
Therefore, the behavior of high-frequency component is often important, 
and the conclusion here is expected to extend to wide range of PDEs~\eqref{eq_ivp}. 
%In what follows, we compare methods from this standpoint. 

%However, the average-difference method has been recently devised and thus less investigated. 
%For example, its higher order extensions have not been discussed whereas 
%higher order one-sided difference, central difference, and compact difference operators are well developed. 
%This might give an impression that the direction this paper suggests, 
%i.e., the use of the average-difference method is not necessarily promising. 
%To partially ease this concern, in the next section, 
%we consider an extension of the average-difference method. 

\begin{remark}
The average-difference method resembles the box scheme. 
In fact, the average-difference method for the advection equation $ u_t = u_x $ 
coincides with the box scheme (detailed analysis on the box scheme and its higher order extensions for the advection equation can be found in Ascher--McLachlan~\cite{AM2004,AM2005}, Frank--Reich~\cite{FR2006} and so on). 
Therefore, the average-difference method also resembles to multisymplectic integrators (see, e.g.,~\cite{Lemkuhler-Reich2004}). 
For example, the Preissman box scheme for the linear Klein--Gordon equation $ u_{tx} = u $ 
coincides with the average-difference method with the implicit midpoint rule for temporal discretization. 

However, the direction of the extension to nonlinear case and their scope are different. 
The average-difference method was investigated in Furihata--Sato--Matsuo~\cite{FSM2016} with special emphasis on its application to the equations in the form $ u_{tx} = \delta \mathcal{H} / \delta u $, 
while the Preissman box scheme is designed for maintaining the discrete multisymplecticity. 
\end{remark}

\section{Concluding Remarks}
\label{sec_cr}

\subsection{Our contributions}
In this paper, we considered PDE-theoretical and numerical treatment of evolutionary equations with a mixed derivative. 
%discussed the spatial discretization of the evolutionary equations with a mixed derivative. 

We here emphasize that, the present paper is the first attempt to construct a unified approach for~\eqref{eq_ivp}, 
while currently there are sporadic studies for each specific linear case in the form~\eqref{eq_linear}. 
In the proposed approach, the equivalence of the differential and integral form plays an important role. 
Though several papers dealt with each specific case in the form~\eqref{eq_linear} by following the strategy introduced by Hunter~\cite{H1990}, 
in the present paper, 
we proposed a novel, more unified procedure, which is also applicable to wider class of equations. 
There, we employed the Tseng generalized inverse, 
which is the standard concept of the generalized inverse of the linear operator between Hilbert spaces. 
Moreover, we established the global well-posedness of the sine-Gordon equation by using the newly obtained its integral form. 
It should be noted that, 
although we focused on the periodic domain in this paper, 
we believe that our idea to introduce the Tseng generalized inverse can be applied to other boundary conditions. 

In Section~\ref{sec_class}, we investigated the difference between the discretizations of the differential and integral forms. 
Although most existing numerical methods have been constructed based on their integral forms for the equations in the form~\eqref{eq_linear}, 
we pointed out that the numerical solutions can violate the implicit constraint when one deal with nonlinear case. 
In addition to that, even for the linear case, 
the discretization of the differential form has an advantage that is often free from nonlocal operators. 
Thus, we advocate employing the differential form for actual computation. 
Then, through some mathematical analysis by using the integral form, among several finite difference methods, 
we concluded that the average-difference method is best suited to discretize the mixed derivative. 
%Moreover, we developed some higher order extensions of the average-difference method, 
%and showed this direction is promising. 

\subsection{Future works}

In this paper, 
we left several issues to future works. 

First, the rigorous justification of the infinite dimensional reduction process 
should be done, and we believe that it will be a powerful tool for analyzing 
evolutionary PDEs with a mixed derivative. 

Second, although we focused only on the spatial discretization, 
we should also investigate how to fully discretize them. 
There certainly are a lot of existing works on the temporal discretization of the general implicit DAEs, 
but we think some special treatment will be needed when we deal with DAEs obtained by the spatial discretization of PDEs with the mixed derivative. 

Third, though we used a conserved quantity in order to certify the transformation of the average-difference method for the sine-Gordon equation in Section~\ref{subsec_class}, 
we did not consider any other conservation laws in this paper. 
In view of this, we believe that the combination of the concept of the geometric integration and the framework in this paper should be investigated. 

Finally, detailed analysis and further development including higher order extension of the average-difference method should be discussed. 
As we described in Remarks~\ref{rem_avdiff_gen}, 
we have already obtained some results on this issue and will report it elsewhere soon.

\section*{Acknowledgements}

The authors are indebted to Jason Frank for having the reference~\cite{FR2006} come to the authors' notice. 
%The first author was supported in part by JSPS Research Fellowship for Young Scientists.
%This work was partly supported by JSPS KAKENHI Grant Numbers 25287030, 15H03635, 16KT0016, and 17H02826, and by JST CREST Grant Number JPMJCR14D2, Japan. 

\appendix
\section{Proof of Lemma~\ref{lem_wp_sg_int}}

Here, we prove Lemma~\ref{lem_wp_sg_int}. 

	It is sufficient to prove the map $ F $ defined by $ F (v) = \adps \sin v - ( \tilde{\mathcal{C}} (v) / \mathcal{H}_0 ) \mathbf{1}  $ is a map from $ H^1 (\Sb) $ to $ H^1 (\Sb) $ and globally Lipschitz, i.e., there exists a constant $L_F$ such that $ \| F(v) - F(w) \|_1 \le L_F \| v - w \|_1 $ holds for any $ v , w \in H^1 (\Sb) $ ($ \| \cdot \|_s $ stands for the norm with respect to $H^s( \Sb) $). 
	Since the latter condition implies the former one (recall that $ F(0) = 0 $), we only prove the latter. 
	
	First, since the operator norm of $ \adps  $ is equal to one, we see 
	\begin{align*}
	\| \adps \sin v - \adps \sin w \|_0
	&\le  \| \sin v - \sin w \|_0
	\le \| v - w \|_0.
	\end{align*}
	This implies 
	\begin{align*}
	\| \adps \sin v - \adps \sin w \|^2_1 
	&= \| \adps ( \sin v - \sin w )\|^2_0 
	+ \left\| \partial_x \adps \left( \sin v - \sin w \right) \right\|_0^2 \\
	&\le \| v - w \|_0^2 + \left\| \sin v - \sin w \right\|_0^2 
	\le 2 \| v - w \|_0^2 \le 2 \| v - w \|_1^2. 
	\end{align*}
	Then, it holds that
	\begin{align*}
	&\left|  \tilde{\mathcal{C}} (v) - \tilde{\mathcal{C}} (w) \right| \\
	\le{}& \left| \int_{\Sb} \left(  \cos v (x) \adps \sin v (x) -  \cos w (x) \adps \sin w (x) \right) \rd x \right| \\
	\le{}& \left| \int_{\Sb} \left( \frac{\cos v + \cos w}{2} \adps ( \sin v - \sin w ) + ( \cos v  - \cos w ) \frac{\adps (\sin v +  \sin w)}{2} \right) \rd x  \right| \\
	\le{}& \left\| \frac{\cos v + \cos w}{2} \right\|_0 \| \sin v - \sin w \|_0 + \| \cos v - \cos w \|_0 \left\| \frac{\sin v + \sin w}{2} \right\|_0 \\
	\le{}& 2 \sqrt{2\pi} \| v - w \|_{0} \le 2 \sqrt{2\pi} \| v - w \|_1. 
	\end{align*}
	Summing up, we see that 
	\begin{align*}
	\| F(v) - F(w) \|_1 
	\le \sqrt{2} \| v - w \|_1 + \left| \tilde{C} (v) - \tilde{C} (w) \right| \| \mathbf{1} \|_1 \le \left( \sqrt{2} + \frac{4 \pi }{ \mathcal{H}_0} \right) \| v - w \|_1,
	\end{align*}
	which proves the lemma. 

\begin{remark}\label{rem_mtg}
	As shown in the proof above, 
	the maximal Tseng generalized inverse $ \adps $ is extremely useful for such an analysis by the following reasons. 
	First, if we employ the other Tseng generalized inverse $ \pid $, the domain of the  map $ \tilde{F} (v) = \pid \sin v - ( \tilde{\mathcal{C}} / \mathcal{H}_0 ) \mathbf{1} $ is $ \{ v \in H^1(\Sb) \mid \mathcal{F} (v) = 0 \} $ so that the analysis will be complicated. 
	Second, because the operator norms of $ \adps $, $ \partial_x \adps $ and $ \adps \partial_x$ are one, 
	the estimation of each term is usually quite easy (cf. \cite[Lemma~2.6]{LY2017_2} for the evaluation of another similar operator).
\end{remark}

%\section*{References}

\bibliographystyle{amsplain}
\bibliography{reference}

\end{document}